\numberwithin{equation}{section}
\theoremstyle{plain}
\newtheorem{thm}{Theorem}[section]
\newtheorem{prop}[thm]{Proposition}
\newtheorem{cor}[thm]{Corollary}
\newtheorem{rem}[thm]{Remark}
\newtheorem{claim}{Claim}
\theoremstyle{definition}
\newtheorem{defn}[thm]{Definition}
\newcommand{\Z}{\mathbb{Z}}
\def\Aut{\operatorname{Aut}}
\def\PGL{\operatorname{PGL}}
\begin{document}
\title[Fake ES-irreducibile components of of smooth plane sextics]{On fake ES-irreducibile components of certain strata of smooth plane sextics}
\author[E. Badr] {Eslam Badr}
\address{$\bullet$\,\,E. Badr}
\address{Mathematics Department,
Faculty of Science, Cairo University, Giza-Egypt}
\email{eslam@sci.cu.edu.eg}
\address{Mathematics and Actuarial Science Department (MACT), American University in Cairo (AUC), New Cairo-Egypt}
\email{eslammath@aucegypt.edu}

\author[F. Bars] {Francesc Bars}
\address{$\bullet$\,\,F. Bars}
\address{Departament Matem\`atiques, Edif. C, Universitat Aut\`onoma de Barcelona\\
08193 Bellaterra, Catalonia}
\address{Barcelona Graduate School of Mathematics,
Catalonia} \email{francesc.bars@uab.cat}
%
%

\keywords{plane curves; automorphism groups, ES-irreducibility}

\subjclass[2020]{14H37, 14H10, 14H45, 14H50}

\maketitle
\begin{abstract}
We construct the first examples of what we call \emph{fake ES-irreducible components}; Definition \ref{defnfake}. In our way to do so, we classify the automorphism groups of smooth plane sextics that only have automorphisms of order $\leq3$; Theorems \ref{thm1}, \ref{thm2} and \ref{thm3}, Corollaries \ref{cor312} and \ref{3123}.
\end{abstract}
\section{Introduction}
Let $\mathcal{M}_g^{\operatorname{Pl}}$ be the set of $K$-isomorphism classes of smooth plane curves $C$ of a fixed degree $d\geq4$. Here $K$ is an algebraically closed field of characterisitc $p=0$ or $p>2g+1$, where $g=(d-1)(d-2)/2\geq3$ is the geometric genus of $C$.

We can associate to any $[C]\in\mathcal{M}_g^{\operatorname{Pl}}$ infinitely many non-singular plane models, each of them is given by a homogeneous polynomial equation $C:F(X,Y,Z)=0$ of degree $d$ in $\mathbb{P}^2(K)$. Moreover, two such plane models for $C$ are $K$-isomorphic and their automorphism groups are $\operatorname{PGL}_3(K)$-conjugated via a projective change of variables $\phi\in\PGL_3(K)$.

Now, suppose that $G$ is a finite non-trivial group that can be embedded into $\PGL_3(K)$. We write $[C]\in\mathcal{M}_g^{\operatorname{Pl}}(G)$ when there exists an injective representation $\varrho:G\hookrightarrow\PGL_3(K)$ such that $\varrho(G)$ is a subgroup of $\Aut(C)$; the automorphism group of $C:F(X,Y,Z)=0$ inside $\PGL_3(K)$. More precisely, we say that $[C]$ belongs to the component $\mathcal{M}_g^{\operatorname{Pl}}(\varrho(G))$ of $\mathcal{M}_g^{\operatorname{Pl}}(G)$. Similarly, we write $[C]\in\widetilde{\mathcal{M}_g^{\operatorname{Pl}}}(G)$ when $\varrho(G)=\Aut(C)$ for some $\varrho$, and again we say that $[C]$ belongs to the component $\widetilde{\mathcal{M}_g^{\operatorname{Pl}}}(\varrho(G))$ of $\widetilde{\mathcal{M}_g^{\operatorname{Pl}}}(G)$.

Clearly, if $\varrho_{i}:G\hookrightarrow\operatorname{PGL}_3(K)$, for $i=1,2$, are $\operatorname{PGL}_3(K)$-conjugated, then $\mathcal{M}_g^{\operatorname{Pl}}(\varrho_1(G))=\mathcal{M}_g^{\operatorname{Pl}}(\varrho_2(G))$ and $\widetilde{\mathcal{M}_g^{\operatorname{Pl}}}(\varrho_1(G))=\widetilde{\mathcal{M}_g^{\operatorname{Pl}}}(\varrho_2(G))$. Accordingly,
$$
\mathcal{M}_g^{\operatorname{Pl}}(G)=\bigcup_{[\varrho]\in R_G}\,\mathcal{M}_g^{\operatorname{Pl}}\left(\varrho(G)\right)\,\,\text{and}\,\,\widetilde{\mathcal{M}_g^{\operatorname{Pl}}}(G)=\bigsqcup_{[\varrho]\in R_G}\,\widetilde{\mathcal{M}_g^{\operatorname{Pl}}}\left(\varrho(G)\right).
$$
Here $R_G:=\left\{\varrho:G\hookrightarrow\PGL_3(K)\right\}/\sim$, where $\varrho_1\sim\varrho_2$ if and only if $\varrho_1(G)$ and $\varrho_2(G)$ are $\PGL_3(K)$-conjugated.

\begin{defn}[ES-irreducibility \cite{MR3554328}]\label{ESdefn}
Each $[\varrho]\in R_G$ such that $\widetilde{\mathcal{M}_g^{\operatorname{Pl}}}\left(\varrho(G)\right)\neq\emptyset$ is called an \emph{ES-irreducible component} for $\widetilde{\mathcal{M}_g^{\operatorname{Pl}}}(G)$. We call $\widetilde{\mathcal{M}_g^{\operatorname{Pl}}}(G)$ \emph{ES-irreducible} if it has exactly one ES-irreducible component.
\end{defn}

Clearly, if a non-empty $\widetilde{\mathcal{M}_g^{\operatorname{Pl}}}(G)$ is not ES-irreducible, then it is not irreducible and the number of its ES-irreducible components is a lower bound for the number of its
irreducible components inside the coarse moduli space $\mathcal{M}_g$ of $K$-isomorphism classes of smooth curves of genus $g$.

Now, in the language of ES-irreducibility, one can interpret the results of Henn \cite{henn1976automorphismengruppen} and Komiya-Kuribayashi \cite{MR555703} for smooth plane quartic curves, which are genus $g=3$ curves, as follows: the strata $\widetilde{\mathcal{M}_3^{\operatorname{Pl}}}(G)$ are either empty or ES-irreducible. Thus each non-empty $\widetilde{\mathcal{M}_3^{\operatorname{Pl}}}(G)$ is described by a single \emph{normal form}; a homogenous polynomial equation $F(X,Y,Z)=0$ in $\mathbb{P}^2(K)$ equipped with parameters as its coefficients such that any $[C]\in\widetilde{\mathcal{M}_3^{\operatorname{Pl}}}(G)$ can be described by a smooth plane model through a specialization of those parameters.

\vspace{0.2cm}
\noindent\textbf{Notation.} Throughout the paper, $L_{i,B}$ denotes the generic homogeneous polynomial of degree $i$ in the variables $\{X,Y,Z\}-\{B\}$.

By $\zeta_n$ we mean a fixed primitive $n$th root of unity in $K$.

A projective linear transformation $A=(a_{i,j})\in\PGL_3(K)$ is sometimes written as
$$[a_{1,1}X+a_{1,2}Y+a_{1,3}Z:a_{2,1}X+a_{2,2}Y+a_{2,3}Z:a_{3,1}X+a_{3,2}Y+a_{3,3}Z].$$
For example, $[X:Z:Y]$ represents the projective change of variables $X\mapsto X,\,Y\mapsto Z,\,Z\mapsto Y$, and $\operatorname{diag}(1,a,b)$ represents $X\mapsto X,\,Y\mapsto aY,\,Z\mapsto bZ$ with $a,b\in K^*$.

We use the formal GAP library notations ``GAP$(n, m)$`` to refer the finite group of order $n$ that appears
in the $m$-th position of the atlas for small finite groups \cite{Gap}. See also \href{https://people.maths.bris.ac.uk/~matyd/GroupNames/index500.html}{GroupNames}.

Fix the following subgroups in $\PGL_3(K)$:

\begin{itemize}
\item $\varrho_1(\Z/2\Z):=\langle\operatorname{diag}(1,1,-1)\rangle$ and $\varrho_1((\Z/2\Z)^2):=\langle\varrho_1(\Z/2\Z),\,\operatorname{diag}(1,-1,1)\rangle$,
\item $\varrho_1(\Z/3\Z):=\langle\operatorname{diag}(1,1,\zeta_3)\rangle$ and $\varrho_1((\Z/3\Z)^2):=\langle\varrho_1(\Z/3\Z),\,\operatorname{diag}(1,\zeta_3,1)\rangle$,
\item $\varrho_2(\Z/3\Z):=\langle\operatorname{diag}(1,\zeta_3,\zeta_3^{-1})\rangle$ and $\varrho_{2}((\Z/3\Z)^2):=\langle\varrho_2(\Z/3\Z),[Y:Z:X]\rangle$,
\item $\varrho_1(\operatorname{S}_3):=\langle[Y:Z:X],\,[X:Z:Y]\rangle$ and $\varrho_2(\operatorname{S}_3):=\langle\varrho_2(\Z/3\Z),\,[X:Z:Y]\rangle$,
\item $\varrho_1(\Z/3\Z\rtimes\operatorname{S}_3):=\langle\varrho_1(\operatorname{S}_3),\,\varrho_2(\Z/3\Z)\rangle$,
\item $\varrho_{1}(\operatorname{A}_4):=\langle\varrho_1((\Z/2\Z)^2),\,[Y:Z:X]\rangle$ and $\varrho_{2}(\operatorname{A}_4):=\langle\varrho_1((\Z/2\Z)^2),\,[\zeta_6^{-1}Y:Z:X]\rangle$.
\end{itemize}
\vspace{0.2cm}

\begin{rem}
P. Henn observed that $\mathcal{M}_3^{\operatorname{Pl}}(\Z/3\Z)$ admits two ES-components. One component corresponds to $\varrho_1(\Z/3\Z)$ where any $[C]\in\mathcal{M}_3^{\operatorname{Pl}}(\varrho_1(\Z/3\Z))$ is given by an equation of the form $Z^3Y+L_{4,Z}=0$. The second component corresponds to $\varrho_2(\Z/3\Z)$ such that any $[C']\in\mathcal{M}_3^{\operatorname{Pl}}(\varrho_2(\Z/3\Z))$ is given by an equation of the form $X^4+ X(Y^3+Z^3)+\alpha_{2,1}X^2YZ+\alpha_{1,1}X(YZ)^2=0$ for some $\alpha_{2,1},\alpha_{1,1}\in K$. In particular, $C'$ has $[X:Z:Y]$ as an extra involution, thus $C'$ always has the symmetry group $\operatorname{S}_3$ as a subgroup of automorphisms. Therefore, $\widetilde{\mathcal{M}_3^{\operatorname{Pl}}}(\varrho_2(\Z/3\Z))=\emptyset$ and $\mathcal{M}_3^{\operatorname{Pl}}(\varrho_2(\Z/3\Z))\subseteq\mathcal{M}_3^{\operatorname{Pl}}(\operatorname{S}_3)$.
\end{rem}
Concerning smooth plane quintic curves, which are genus $g=6$ curves, Badr-Bars \cite{MR3508302} showed that all the strata $\widetilde{\mathcal{M}_6^{\operatorname{Pl}}}(G)$ are either empty or ES-irreducible except when $G=\Z/4\Z$. In this case, $\widetilde{\mathcal{M}_6^{\operatorname{Pl}}}(\Z/4\Z)$ has exactly two ES-irreducible components. Moreover, we generalized this result in \cite{MR3554328} for any odd degree $d\geq5$. More precisely, we proved that $\widetilde{\mathcal{M}_g^{\operatorname{Pl}}}(\Z/(d-1)\Z)$ has at least two ES-irreducible components for any $g=(d-1)(d-2)/2$ with $d\geq5$ odd. However, each of the strata $\widetilde{\mathcal{M}_6^{\operatorname{Pl}}}(\varrho(G))$ is described again by a single normal form.

Accordingly, we were wondering if this is the situation in general. That is to say, there always exists a single normal form describing the elements of $\widetilde{\mathcal{M}_g^{\operatorname{Pl}}}(\varrho(G))$ for each $\varrho\in R_G$. In this article, we will show that this impression is not true at least for smooth plane sextic curves, which are genus $g=10$ curves. We establish three counter examples corresponding to $G=\operatorname{\Z/3\Z}$ and $\operatorname{A}_4$ respectively.

On the other hand, classifying automorphism groups of smooth curves is a long standing problem that receives interest by many people. In the case of hyperelliptic curve, the structure of the automorphism group is quite explicit, see \cite{MR897252, MR1223022, MR2280308, MR2035219}. For non-hyperelliptic curves, we still have a lack of knowledge about the structure, except for some special cases. For example, the cases of low genus and also Hurwitz curves, see \cite{MR1796706, henn1976automorphismengruppen, MR839811, MR1072285, MR1068416}. This lack motivates us to do more investigation in this direction, especially for the case of smooth plane curves of degree $d\geq4$. In this paper, we classify the automorphism groups of smooth plane curves $C$ of degree $6$ such that $2$ and $3$ are the only divisors of $|\Aut(C)|$. A more detailed treatment of automorphisms of non-singular plane sextic curves is intended in \cite{BadrBarssextic}.

\vspace{0.5cm}
\noindent\textbf{Acknowledgments.} The authors are supported by PID2020-116542GB-I00, Ministerio de Ciencia y Universidades of Spanish
government. Also, E. Badr is partially supported by the School of Sciences and
Engineering (SSE) of the American Univeristy in Cairo (AUC).
\section{Main Results}
\begin{thm}\label{thm1}
Let $C$ be a smooth plane sextic curve that admits an automorphism of maximal order $2$. Up to $K$-isomorphism, $C$ is defined by an equation of the form:
$$
C:Z^6+Z^4L_{2,Z}+Z^2L_{4,Z}+L_{6,Z}=0
$$
such that $L_{6,Z}$ is of degree $\geq5$ in both $X$ and $Y$, and at least one of the binary forms $L_{2,Z}$ and $L_{4,Z}$ is non-zero. Moreover, $\Aut(C)=\varrho_1(\Z/2\Z)$ unless $L_{2,Z},\,L_{4,Z}$ and $L_{6,Z}$ belong to the ring $K[X^2,Y^2]$. In the latter case, $\Aut(C)=\varrho_1((\Z/2\Z)^2)$.
\end{thm}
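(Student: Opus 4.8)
The plan is to diagonalise the given order‑$2$ automorphism, read off the shape of the defining equation and the displayed restrictions from the Jacobian criterion together with the hypothesis on orders of automorphisms, and finally to determine $\Aut(C)$ from the structure of the centraliser of the involution inside $\PGL_3(K)$. Since $\deg C=6\geq4$ we regard $\Aut(C)$ as a subgroup of $\PGL_3(K)$. Every non-trivial involution of $\PGL_3(K)$ lifts to a matrix in $\operatorname{GL}_3(K)$ with scalar square; rescaling and diagonalising, its eigenvalues become $\pm1$ with both signs occurring, so up to conjugation it equals $\sigma_1:=\operatorname{diag}(1,1,-1)$, that is $\varrho_1(\Z/2\Z)$. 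Hence, up to $K$-isomorphism, we may assume $\sigma_1\in\Aut(C)$, so that $F(X,Y,-Z)=\lambda F(X,Y,Z)$ for some $\lambda\in K^{*}$ with $\lambda^{2}=1$. If $\lambda=-1$ then $F$ involves only odd powers of $Z$, whence $Z\mid F$ and $C$ is singular along the line $Z=0$; so $\lambda=1$ and $F=a_0Z^{6}+Z^{4}L_{2,Z}+Z^{2}L_{4,Z}+L_{6,Z}$.

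Applying the Jacobian criterion at the three coordinate points yields the stated restrictions. Since $F$ has no monomial divisible by $XZ^{5}$ or $YZ^{5}$, both $\partial_XF$ and $\partial_YF$ vanish at $[0:0:1]$, while $\partial_ZF([0:0:1])=6a_0$ and $F([0:0:1])=a_0$; thus $a_0\neq0$, and we scale $F$ so that $a_0=1$. Likewise, if the coefficient of $X^{6}$ in $L_{6,Z}$ vanishes then $[1:0:0]\in C$ with $\partial_XF=\partial_ZF=0$ there, while $\partial_YF([1:0:0])$ equals the coefficient of $X^{5}Y$ in $L_{6,Z}$; smoothness therefore forces one of these coefficients to be non-zero, i.e.\ $L_{6,Z}$ has degree $\geq5$ in $X$, and symmetrically (at $[0:1:0]$) degree $\geq5$ in $Y$. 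Finally, if $L_{2,Z}=L_{4,Z}=0$ then $F=Z^{6}+L_{6,Z}(X,Y)$ is invariant under $\operatorname{diag}(1,1,\zeta_6)$, an automorphism of $C$ of order $6$, against the hypothesis; so $L_{2,Z}$ or $L_{4,Z}$ is non-zero. This establishes the asserted normal form.

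To compute $\Aut(C)$, note first that the hypothesis makes $\Aut(C)$ an elementary abelian $2$-group containing $\langle\sigma_1\rangle$, so each of its elements commutes with $\sigma_1$. The centraliser $C_{\PGL_3(K)}(\sigma_1)$ consists of the classes of block matrices $\operatorname{diag}(A,1)$ with $A\in\operatorname{GL}_2(K)$ acting on $\langle X,Y\rangle$, and its involutions are $\sigma_1$ together with a single conjugacy class, represented by $\sigma_2:=\operatorname{diag}(1,-1,1)$. If $\Aut(C)=\langle\sigma_1\rangle$ we are in the first case; moreover the $L_{i,Z}$ cannot then all lie in $K[X^{2},Y^{2}]$, since otherwise $F$ would be invariant under $Y\mapsto-Y$ and $\sigma_2\in\Aut(C)$. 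Otherwise, choose an involution $\tau\in\Aut(C)\setminus\langle\sigma_1\rangle$ and $\phi\in C_{\PGL_3(K)}(\sigma_1)$ with $\phi\tau\phi^{-1}=\sigma_2$; replacing $C$ by $\phi(C)$ — a substitution of the form $\operatorname{diag}(A,1)$, which fixes $Z$ and merely transforms the binary forms $L_{i,Z}$, hence preserves the normal form — we may assume $\sigma_1,\sigma_2\in\Aut(C)$. Then every involution of $\Aut(C)$ commutes with both $\sigma_1$ and $\sigma_2$, hence is diagonal, hence lies in $\langle\sigma_1,\sigma_2\rangle$; therefore $\Aut(C)=\langle\sigma_1,\sigma_2\rangle=\varrho_1((\Z/2\Z)^{2})$, and invariance under $\sigma_2$ (i.e.\ under $Y\mapsto-Y$) forces $L_{2,Z},L_{4,Z},L_{6,Z}\in K[X^{2},Y^{2}]$. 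This yields the claimed dichotomy.

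I expect the last paragraph to be the main obstacle: one must classify all involutions commuting with $\sigma_1$ and, above all, verify that the coordinate change normalising a second involution keeps the equation in the prescribed shape, so that the final alternative can be read off directly from whether $L_{2,Z},L_{4,Z},L_{6,Z}$ lie in $K[X^{2},Y^{2}]$; the remaining steps are routine once the normal form is in place.
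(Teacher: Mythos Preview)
Your proof is correct and, in fact, takes a more elementary route than the paper. The paper invokes Harui's structure theorem (Theorem~\ref{teoHarui}) and runs through its cases: it rules out the primitive subgroups of $\PGL_3(K)$ and descendants of the Klein sextic by order considerations, and then treats separately the case where $\Aut(C)$ fixes a line and a point (obtaining the short exact sequence $1\to\langle\sigma\rangle\to\Aut(C)\to\Lambda(\Aut(C))\to1$ with $\Lambda(\Aut(C))\in\{\Z/2\Z,\operatorname{D}_4\}$) and the case where $C$ is a descendant of the Fermat sextic (identifying the two $\Aut(\mathcal{F}_6)$-conjugacy classes of Klein four-groups and noting they are $\PGL_3(K)$-conjugate). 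In both branches the paper then conjugates a second involution to $\operatorname{diag}(1,-1,1)$ and checks that $(\Z/2\Z)^3$ cannot occur.

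Your argument bypasses Harui entirely by exploiting the hypothesis more directly: since every non-trivial automorphism has order $2$, $\Aut(C)$ is an elementary abelian $2$-group, hence abelian, hence contained in the centraliser $C_{\PGL_3(K)}(\sigma_1)$. You then compute this centraliser explicitly, observe that its non-central involutions form a single conjugacy class represented by $\sigma_2$, and finish by noting that anything commuting with both $\sigma_1$ and $\sigma_2$ is diagonal and therefore already in $\langle\sigma_1,\sigma_2\rangle$. This is cleaner and self-contained for this particular theorem, requiring neither Harui's classification nor the explicit description of $\Aut(\mathcal{F}_6)$ and $\Aut(\mathcal{K}_6)$; the paper's approach, by contrast, is uniform with its proofs of Theorems~\ref{thm2} and~\ref{thm3}, where the larger possible automorphism groups genuinely require the full case analysis.
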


\begin{cor}\label{cor201}
The strata $\widetilde{\mathcal{M}_{10}^{\operatorname{Pl}}}(\Z/2\Z)$ and $\widetilde{\mathcal{M}_{10}^{\operatorname{Pl}}}((\Z/2\Z)^2)$ are ES-irreducible.
\end{cor}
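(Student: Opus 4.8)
The plan is to read Corollary \ref{cor201} off Theorem \ref{thm1} almost directly: Theorem \ref{thm1} contains all the geometry, and what is left is a short translation into the language of Definition \ref{ESdefn} together with a non-emptiness check.

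First I would recall that an ES-irreducible component of $\widetilde{\mathcal{M}_{10}^{\operatorname{Pl}}}(G)$ is a class $[\varrho]\in R_{G}$ with $\widetilde{\mathcal{M}_{10}^{\operatorname{Pl}}}(\varrho(G))\neq\emptyset$, where two representations determine the same class exactly when their images are $\PGL_{3}(K)$-conjugate. Take $G=\Z/2\Z$ and pick any $[C]\in\widetilde{\mathcal{M}_{10}^{\operatorname{Pl}}}(\Z/2\Z)$; then $\Aut(C)\cong\Z/2\Z$, so $C$ is a smooth plane sextic with an automorphism of maximal order $2$ and Theorem \ref{thm1} applies. It says that, up to a projective change of coordinates, $\Aut(C)$ is either $\varrho_{1}(\Z/2\Z)$ or $\varrho_{1}((\Z/2\Z)^{2})$; since $|\Aut(C)|=2$ the second is impossible, so $\Aut(C)$ is $\PGL_{3}(K)$-conjugate to $\varrho_{1}(\Z/2\Z)$. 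Thus every class of $\widetilde{\mathcal{M}_{10}^{\operatorname{Pl}}}(\Z/2\Z)$ lies in $\widetilde{\mathcal{M}_{10}^{\operatorname{Pl}}}(\varrho_{1}(\Z/2\Z))$, i.e. $[\varrho_{1}(\Z/2\Z)]$ is the only class in $R_{\Z/2\Z}$ that can yield a non-empty stratum, so $\widetilde{\mathcal{M}_{10}^{\operatorname{Pl}}}(\Z/2\Z)$ has at most one ES-irreducible component. Running the same argument for $G=(\Z/2\Z)^{2}$ — where now $|\Aut(C)|=4$ forces the branch $\Aut(C)=\varrho_{1}((\Z/2\Z)^{2})$ of Theorem \ref{thm1} — gives that $\widetilde{\mathcal{M}_{10}^{\operatorname{Pl}}}((\Z/2\Z)^{2})$ also has at most one ES-irreducible component, namely $[\varrho_{1}((\Z/2\Z)^{2})]$.

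It then remains to check that both strata are in fact non-empty, for only then does ``at most one component'' upgrade to ``exactly one'', which is ES-irreducibility. For this I would produce one smooth sextic inside each of the two sub-families appearing in Theorem \ref{thm1}: for example $C:Z^{6}+Z^{4}XY+X^{6}+Y^{6}=0$, in which $L_{2,Z}=XY\notin K[X^{2},Y^{2}]$, for $\widetilde{\mathcal{M}_{10}^{\operatorname{Pl}}}(\Z/2\Z)$, and $C:Z^{6}+Z^{4}X^{2}+X^{6}+Y^{6}=0$, in which $L_{2,Z},L_{4,Z},L_{6,Z}\in K[X^{2},Y^{2}]$, for $\widetilde{\mathcal{M}_{10}^{\operatorname{Pl}}}((\Z/2\Z)^{2})$. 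Smoothness of each is a routine Jacobian-criterion check, both are visibly $\operatorname{diag}(1,1,-1)$-invariant, and the exact value of the automorphism group is then the assertion of Theorem \ref{thm1}. I expect this last point — confirming that the normal form of Theorem \ref{thm1} really is populated by curves whose automorphism group is no larger than the minimal one forced by the shape of the equation — to be the only genuinely delicate step; it is, however, already subsumed in the proof of Theorem \ref{thm1} (and can be seen on the explicit models above). Putting the two parts together, each of $\widetilde{\mathcal{M}_{10}^{\operatorname{Pl}}}(\Z/2\Z)$ and $\widetilde{\mathcal{M}_{10}^{\operatorname{Pl}}}((\Z/2\Z)^{2})$ has exactly one ES-irreducible component and is therefore ES-irreducible.
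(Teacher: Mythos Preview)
Your deduction of the corollary from Theorem \ref{thm1} is exactly what the paper intends: the corollary is stated immediately after Theorem \ref{thm1} with no separate proof, and your translation into the language of Definition \ref{ESdefn} is the right one.

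The gap is in your explicit non-emptiness witnesses. For $Z^{6}+Z^{4}XY+X^{6}+Y^{6}=0$, every monomial has $\deg_Y-\deg_Z\equiv 0\pmod 3$, so $\operatorname{diag}(1,\zeta_3,\zeta_3^{-1})$ is an automorphism of order $3$ (in fact, together with $[Y:X:Z]$ this curve carries an $\operatorname{S}_3$). For $Z^{6}+Z^{4}X^{2}+X^{6}+Y^{6}=0$, the only monomial involving $Y$ is $Y^{6}$, so $\operatorname{diag}(1,\zeta_6,1)$ is an automorphism of order $6$. In neither case is the hypothesis ``automorphism of maximal order $2$'' of Theorem \ref{thm1} met, and your appeal to that theorem to read off $\Aut(C)$ is circular: the hypothesis you would need is precisely what you are trying to verify. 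If you want concrete witnesses you must break these extra symmetries by choosing $L_{2,Z},L_{4,Z},L_{6,Z}$ less specially (in particular $L_{6,Z}=X^{6}+Y^{6}$ is far too symmetric); alternatively, a routine genericity/parameter-count argument on the normal forms of Theorem \ref{thm1} is the cleaner way to see both strata are non-empty.
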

\begin{defn}[\cite{Mit}]
An homology of period $n$ is a projective linear transformation of the plane $\mathbb{P}^2(K)$, which is $\operatorname{PGL}_3(K)$-conjugate to
$\operatorname{diag}(1,1,\zeta_{n})$. Such a transformation fixes pointwise a line $\mathcal{L}$ (its axis) and a point $P$ off this line (its center). In its canonical form, $\mathcal{L}:Z=0$ and center $P=(0:0:1)$.
\end{defn}

Otherwise, it is called a \emph{non-homology}.

\begin{thm}\label{thm2}
Let $C$ be a smooth plane sextic curve that admits an homology of period $3$ as an automorphism of maximal order. Up to $K$-isomorphism, $C$ is defined by an equation of the form $Z^6+Z^3L_{3,Z}+L_{6,Z}=0$ where neither $L_{3,Z}$ nor $L_{6,Z}$ equals $0$. Moreover, $\Aut(C)$ is always $\varrho_1\left(\Z/3\Z\right)\rangle$ except when $C$ is $K$-isomorphic to $C'$ of the form
$
C':X^6+Y^6+Z^6+Z^3\left(\alpha_{3,0}X^3+\alpha_{0,3}Y^3\right)+\,\alpha_{3,3}X^3Y^3=0,
$
such that $\alpha_{3,0},\alpha_{0,3},\alpha_{3,3}$ are pair-wise distinct modulo $\{\pm1\}$. In this case, $\Aut(C')=\varrho_1\left((\Z/3\Z)^2\right)$.
\end{thm}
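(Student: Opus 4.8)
The plan is to normalise with respect to a single homology and then pin down $\Aut(C)$ in two steps — a group-theoretic step confining $\Aut(C)$ to the stabiliser of the center and axis of that homology, and an explicit step with the binary forms $L_{3,Z},L_{6,Z}$. For the normal form, conjugate so that the given period-$3$ homology is $\varphi=\operatorname{diag}(1,1,\zeta_3)$. A ternary sextic fixed by $\varphi$ is a $K[X,Y]$-combination of $1,Z^3,Z^6$, so $C:cZ^6+Z^3L_{3,Z}+L_{6,Z}=0$; if $c=0$ then $(0:0:1)\in C$ while all three partial derivatives vanish there, contradicting smoothness, so $c\neq0$ and we scale $c=1$. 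If $L_{3,Z}=0$ then $\operatorname{diag}(1,1,\zeta_6)\in\Aut(C)$ has order $6$, against maximality of $3$; if $L_{6,Z}=0$ then $F=Z^3(Z^3+L_{3,Z})$ is non-reduced. Hence $L_{3,Z},L_{6,Z}\neq0$.

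\emph{Restricting $\Aut(C)$.} Put $G=\Aut(C)$; by hypothesis it is a finite subgroup of $\PGL_3(K)$ all of whose elements have order $\le3$, with $\varphi\in G$. The key claim is that $G$ fixes the center $(0:0:1)$ and the axis $Z=0$ of $\varphi$. For $\sigma\in G$ the element $\sigma\varphi\sigma^{-1}$ is again a period-$3$ homology in $G$; if every such conjugate lies in $\langle\varphi\rangle$ then $\sigma$ permutes $\{(0:0:1)\}\cup\{Z=0\}$ and so fixes the point and the line individually. Otherwise $G$ contains two period-$3$ homologies with distinct centers, and here one uses the classification of finite subgroups of $\PGL_3(K)$: the finite subgroups with all elements of order $\le3$ that contain a period-$3$ homology are, up to conjugacy, only $\varrho_1(\Z/3\Z)$, $\varrho_1((\Z/3)^2)$ and the Heisenberg group $H$ of order $27$ — every other candidate (the two $\operatorname{S}_3$'s, $\varrho_2((\Z/3)^2)$, $\operatorname{A}_4$, $\Z/3\Z\rtimes\operatorname{S}_3$, $(\Z/3)^2\rtimes\operatorname{S}_3$, the Hessian groups) either carries no period-$3$ homology or has an element of order $>3$. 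The case $G=H$ is impossible here, because an $H$-invariant smooth plane sextic is, after a change of variables, $X^6+Y^6+Z^6+\beta(X^3Y^3+Y^3Z^3+Z^3X^3)=0$, which is $\operatorname{S}_3$-symmetric and hence carries an automorphism of order $6$. Both remaining groups fix $(0:0:1)$ and $Z=0$. I expect this classification bookkeeping — in particular excluding $H$ and the Hessian groups while tracking which groups actually contain a period-$3$ homology — to be the main obstacle.

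\emph{Finishing.} Now every element of $G$ has the form $\operatorname{diag}(A,1)$ with $A\in\operatorname{GL}_2(K)$, and comparing $Z^6$-coefficients shows $A$ fixes the binary forms $L_{3,Z}$ and $L_{6,Z}$, with $A^2=1$ or $A^3=1$ by hypothesis. Conjugating by a suitable $\operatorname{diag}(B,1)$ (which preserves $\varphi$) we may bring the nonzero binary cubic $L_{3,Z}$ to one of its three $\operatorname{GL}_2(K)$-normal forms according to root multiplicity. If $L_{3,Z}$ has a double root, the only $A$ of order $\le3$ fixing it are scalars, so $G=\varrho_1(\Z/3\Z)$. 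If $L_{3,Z}$ has a triple or three simple roots, the non-scalar $A$ of order $\le3$ fixing it generate (in these coordinates) a diagonal $(\Z/3)^2$, and such an $A$ lies in $G$ exactly when it also fixes $L_{6,Z}$, i.e.\ exactly when $L_{6,Z}\in K[X^3,Y^3]$; moreover any order-$2$ element fixing $L_{3,Z}$ would commute with $\varphi$, producing an automorphism of order $6$, which is excluded. Hence either $G=\varrho_1(\Z/3\Z)$, or $G\supseteq\varrho_1((\Z/3)^2)$; in the latter case, after diagonalising $G$, both $L_{3,Z}$ and $L_{6,Z}$ lie in $K[X^3,Y^3]$, smoothness at $(1:0:0)$ and $(0:1:0)$ forces the $X^6$- and $Y^6$-coefficients of $L_{6,Z}$ to be nonzero, and rescaling $X,Y,Z$ brings $C$ to the stated shape $C'$ with $\Aut(C')\supseteq\varrho_1((\Z/3)^2)$. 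Finally, if two of $\alpha_{3,0},\alpha_{0,3},\alpha_{3,3}$ coincided modulo $\{\pm1\}$, then one of the six transformations exchanging two coordinates with a suitable sign (e.g.\ $[Y:X:Z]$ or $[Y:X:-Z]$ when $\alpha_{3,0}=\pm\alpha_{0,3}$) would preserve $C'$ and commute with one of the period-$3$ homologies in $\varrho_1((\Z/3)^2)$, again producing an automorphism of order $6$; since this is excluded, the $\alpha$'s are pairwise distinct modulo $\{\pm1\}$, and reading the same computation backwards shows that for such $C'$ no automorphism outside $\varrho_1((\Z/3)^2)$ can occur, so $\Aut(C')=\varrho_1((\Z/3)^2)$.
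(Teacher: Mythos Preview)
Your argument is essentially correct and reaches the same conclusion, but by a genuinely different route than the paper. The paper organises the proof around Harui's structure theorem (Theorem~\ref{teoHarui}): it rules out primitive groups and Klein descendants, then works inside $\Aut(\mathcal{F}_6)$ for the Fermat-descendant case (checking which subgroups of $(\Z/6\Z)^2\rtimes\operatorname{S}_3$ have only order-$\le 3$ elements and contain a period-$3$ homology), and separately handles the short exact sequence $1\to N\to\Aut(C)\to\Lambda(\Aut(C))\to 1$ for the fixed-point case. You instead bypass Harui entirely and classify directly the finite subgroups of $\PGL_3(K)$ containing a period-$3$ homology with all elements of order $\le 3$, obtaining the short list $\{\Z/3,(\Z/3)^2,\operatorname{He}_3\}$; your exclusion of $\operatorname{He}_3$ via the explicit invariant sextic $X^6+Y^6+Z^6+\beta(X^3Y^3+Y^3Z^3+Z^3X^3)$ is cleaner than the paper's commutator bookkeeping inside $\Aut(\mathcal{F}_6)$. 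What you gain is a more self-contained and conceptually uniform argument; what the paper gains is that Harui's theorem does the heavy lifting, so less needs to be verified by hand.

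One small imprecision to fix in your ``Finishing'' step: in the triple-root case $L_{3,Z}=X^3$, it is \emph{not} true that the non-scalar $A\in\operatorname{GL}_2$ of order $3$ fixing $X^3$ are all diagonal --- lower-triangular matrices $\begin{pmatrix}a&0\\c&d\end{pmatrix}$ with $a^3=d^3=1$, $a\ne d$, and $c$ arbitrary also qualify. This does not damage your conclusion: since you already know $G\cong(\Z/3)^2$ from the first step, any such $A$ is diagonalisable, and conjugating by a lower-triangular $B$ diagonalises $A$ while keeping $L_{3,Z}$ proportional to $X^3$; after that change $L_{3,Z},L_{6,Z}\in K[X^3,Y^3]$ as you want. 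So the statement should read ``after a further change of variables diagonalising $A$'' rather than ``in these coordinates''.
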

\begin{thm}\label{thm3}
Let $C$ be a smooth plane sextic curve that admits a non-homology of period $3$ as an automorphism of maximal order. Up to $K$-isomorphism, $C$ is a member of one of the following families:
  \begin{eqnarray*}
  \mathcal{C}_1&:&X^6+Y^6+Z^6+XYZ\left(\alpha_{4,1}X^3+\alpha_{1,4}Y^3+\alpha_{1,1}Z^3\right)+\alpha_{2,2}X^2Y^2Z^2\\
  &+&\alpha_{3,3}X^3Y^3+\alpha_{3,0}X^3Z^3+\alpha_{0,3}Y^3Z^3=0\\
  \mathcal{C}_2&:&X^5Y+Y^5Z+XZ^5+XYZ\left(\alpha_{3,2}X^2Y+\alpha_{1,3}Y^2Z+\alpha_{2,1}XZ^2\right)\\
  &+&\alpha_{2,4}X^2Y^4+\alpha_{0,2}Y^2Z^4+\alpha_{4,0}X^4Z^2=0.
  \end{eqnarray*}
In either way, $\sigma=\operatorname{diag}(1,\zeta_3,\zeta_3^{-1})$ is an automorphism of maximal order $3$.
\begin{enumerate}[(1)]
  \item The automorphism group $\Aut(\mathcal{C}_1)=\varrho_2(\Z/3\Z)$ except when one of the following conditions holds.
\begin{enumerate}[(i)]
\item If $\alpha_{4,1}=\alpha_{1,4}=\alpha_{1,1}=\alpha_{2,2}=0$ such that $\alpha_{3,3}\neq\alpha_{3,0}$, then $\mathcal{C}_1$ reduces to
          $$
       X^6+Y^6+Z^6+X^3\left(\alpha_{3,3}Y^3+\alpha_{3,0}Z^3\right)+\alpha_{0,3}Y^3Z^3=0,
      $$
where $\Aut(\mathcal{C}_1)=\varrho_1((\Z/3\Z)^2)$.
\item If \textbf{(a)} $\alpha_{4,1}=\pm\alpha_{1,4}\, \text{and}\,\alpha_{3,0}=\pm\alpha_{0,3},\,$ \textbf{(b)} $\alpha_{1,4}=\pm\alpha_{1,1}\,\, \text{and}\,\,\alpha_{3,3}=\pm\alpha_{3,0},$ or \textbf{(c)} $\alpha_{4,1}=\pm\alpha_{1,1}\,\, \text{and}\,\,\alpha_{3,3}=\pm\alpha_{0,3},$ then $\mathcal{C}_1$ is $K$-isomorphic to
    \begin{eqnarray*}
  \mathcal{C}'_1&:&X^6+Y^6+Z^6+\alpha'_{4,1}X^4YZ+\alpha'_{3,3}X^3(Y^3+Z^3)+\alpha'_{2,2}X^2Y^2Z^2\\
  &+& \alpha'_{1,2}XYZ(Y^3+Z^3)+\alpha'_{0,3}Y^3Z^3=0,
\end{eqnarray*}
where $\Aut(\mathcal{C}'_1)=\varrho_2(\operatorname{S}_3)$ if $\alpha'_{4,1}\neq\alpha'_{1,2}$ or $\alpha'_{3,3}\neq\alpha'_{0,3}$, and $\Aut(\mathcal{C}'_1)=\varrho_1(\Z/3\Z\rtimes\operatorname{S}_3)$ otherwise.
\begin{rem}
$\left(\alpha'_{3,3},\alpha'_{1,2}\right)\neq(0,0)$ or $\operatorname{diag}(1,\zeta_6,\zeta_6^{-1})$ will be an automorphism of order $6>3$.
\end{rem}
\item If  $\alpha_{4,1}=\zeta_6^{\ell}\alpha_{1,1},\,$$\alpha_{1,4}=\pm\zeta_6^{-\ell}\alpha_{1,1}$,\, $\alpha_{3,3}=\pm(-1)^{\ell}\alpha_{3,0},\,$ $\alpha_{0,3}=\pm\alpha_{3,0}$ for some $\ell\neq 0$ or $3\,\operatorname{mod}\,6$, then $\mathcal{C}_1$ is $K$-isomorphic to
\begin{eqnarray*}
\mathcal{C}''_1&:&X^6+\zeta_{6}^{2\ell}Y^6+\zeta_6^{-2\ell}Z^6+\alpha'_{1,1}XYZ(X^3+\zeta_6^{2\ell}Y^3+ \zeta_6^{-2\ell}Z^3)+\\
 &+&\alpha'_{3,0}(  X^3Y^3+\zeta_6^{-2\ell}X^3Z^3+ \zeta_6^{2\ell}Y^3Z^3)=0.
   \end{eqnarray*}
   where $\Aut(\mathcal{C}''_1)=\varrho_2((\Z/3\Z)^2)$.
    \item If \textbf{(a)} $(\alpha_{4,1},\alpha_{1,1},\alpha_{1,4}),\,$ $(\alpha_{1,4},\alpha_{4,1},\alpha_{1,1})$ or $(\alpha_{1,1},\alpha_{1,4},\alpha_{4,1})$ equals
    $$
    \left(\dfrac{2\left(29-54\lambda^6-54\mu^6\right)}{27\lambda\mu},\,\dfrac{2\left(27\mu^6-54\lambda^6-52\right)}{27\lambda\mu ^4},\,\dfrac{2\left(27\lambda^6-54\mu ^6-52\right)}{27\lambda^4\mu}\right),
    $$
\textbf{(b)} $(\alpha_{3,0},\alpha_{3,3},\alpha_{0,3}),\,$ $(\alpha_{3,3},\alpha_{0,3},\alpha_{3,0})$ or $(\alpha_{0,3},\alpha_{3,0},\alpha_{3,3})$ equals
$$
\left(\dfrac{2\left(81\lambda ^6-27\mu^6-26\right)}{27\mu ^3},\,\dfrac{2\left(81\mu^6-27\lambda^6-26\right)}{27\lambda^3},\,\dfrac{2\left(82-27\lambda^6-27\mu ^6\right)}{27\lambda^3\mu^3}\right),
$$
and \textbf{(c)} $\alpha_{2,2}=\dfrac{9\lambda^6+9\mu^6+10}{3\lambda^2\mu^2}$ for some $\lambda,\mu\in K^*$, then $\mathcal{C}_1$ is $K$-isomorphic to
\begin{eqnarray*}
  \mathcal{C}_{1,\lambda,\mu}: X^6+Y^6+Z^6&+&f_1(\lambda,\mu)X^2Y^2Z^2+f_2(\lambda,\mu)(X^4Y^2+X^2Z^4+Y^4Z^2)\\
&+&f_2(\mu,\lambda)(X^4Z^2+X^2Y^4+Y^2Z^4)=0,
  \end{eqnarray*}
where
\begin{eqnarray*}
f_1(\lambda,\mu)&:=&3(80+81\lambda^6+81\mu^6),\\
f_2(\lambda,\mu)&:=&81\left(1+\zeta_3\lambda^6+\zeta_3^{-1}\mu^6\right).
\end{eqnarray*}
In this case, $\Aut(\mathcal{C}_{1,\lambda,\mu})=\varrho_1(\operatorname{A}_{4})$.
\end{enumerate}

  \item The automorphism group $\Aut(\mathcal{C}_2)=\langle\sigma\rangle=\varrho_2(\Z/3\Z)$ except when one of the following conditions holds.
      \begin{enumerate}[(i)]
\item If $\alpha_{0,2}=\zeta_{21}^{-12r}\alpha_{4,0},\,$ $\alpha_{2,4}=\zeta_{21}^{3r}\alpha_{4,0},\,$ $\alpha_{1,3}=\zeta_{21}^{-6r}\alpha_{3,2},\,$ $\alpha_{2,1}=\zeta_{21}^{3r}\alpha_{3,2},$ then $\mathcal{C}_2$ is $K$-isomorphic to
    \begin{eqnarray*}
  \mathcal{C}'_2&:&X^5Y+Y^5Z+XZ^5+\alpha_{4,0}\zeta_{21}^{4r}\left(X^4Z^2+X^2Y^4+Y^2Z^4\right)\\
  &+& \alpha_{3,2}\zeta_{21}^{-r}XYZ\left(X^2Y+XZ^2+Y^2Z\right)=0,
\end{eqnarray*}
where $\Aut(\mathcal{C}'_2)=\varrho_{2}\left((\Z/3\Z)^2\right)$.
\begin{rem}
$\left(\alpha_{2,4},\alpha_{1,3}\right)\neq(0,0)$ or $\operatorname{diag}(1,\zeta_{21},\zeta_{21}^{-4})$ will be an automorphism of order $21>3$.
\end{rem}
\item If \textbf{(a)} $(\alpha_{2,4},\alpha_{4,0},\alpha_{0,2}),\,$ $(\alpha_{0,2},\alpha_{2,4},\alpha_{4,0})$ or $(\alpha_{4,0},\alpha_{0,2},\alpha_{2,4})$ equals
    $$
    \left(\dfrac{\lambda^5\mu+4\mu^5}{2\lambda^4},\,\dfrac{\lambda+4\lambda^5\mu}{2\mu^2},\,\dfrac{4\lambda+\mu^5}{2\lambda^2\mu^4}\right)
    $$
and \textbf{(b)} $(\alpha_{1,3},\alpha_{3,2},\alpha_{2,1}),\,$ $(\alpha_{2,1},\alpha_{1,3},\alpha_{3,2})$ or $(\alpha_{3,2},\alpha_{2,1},\alpha_{1,3})$ equals
$$
\left(\dfrac{2\left(2\lambda^5\mu+2\lambda+\mu^5\right)}{\lambda^3\mu^2},\,\dfrac{2\lambda^5\mu+4\lambda+4\mu^5}{\lambda^2\mu},\,\frac{2\left(2\lambda^5\mu+\lambda+2\mu^5\right)}{\lambda\mu^3}\right),
$$
then $\mathcal{C}_2$ is $K$-isomorphic to
\begin{eqnarray*}
  \mathcal{C}_{2,\lambda,\mu}: X^6+Y^6+Z^6&+&g_1(\lambda,\mu)(\zeta_3^{-1}X^4Y^2+X^2Z^4+Y^4Z^2)\\
  &+&g_2(\lambda,\mu)(X^4Z^2+\zeta_3X^2Y^4+Y^2Z^4)=0,
  \end{eqnarray*}
where
\begin{eqnarray*}
g_1(\lambda,\mu)&:=&\dfrac{\sqrt{3}\zeta_9\left(\zeta_4\lambda^5\mu+\zeta_{12}\lambda+\zeta_{12}^5\mu^5\right)}{\lambda^5\mu+\lambda+\mu^5},\\
g_2(\lambda,\mu)&:=&\dfrac{\sqrt{3}\zeta_{18}\left(\zeta_{12}^5\lambda^5\mu+\zeta_{12}\lambda+\zeta_4\mu^5\right)}{\lambda^5\mu+\lambda+\mu^5}.
\end{eqnarray*}
In this case, $\Aut(\mathcal{C}_{2,\lambda,\mu})=\varrho_2(\operatorname{A}_{4})$.
\end{enumerate}

\end{enumerate}

\end{thm}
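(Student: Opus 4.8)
The plan is the familiar two-step scheme --- first extract the normal forms $\mathcal{C}_1,\mathcal{C}_2$ from the single hypothesis, then determine $\Aut(C)$ by testing the finitely many admissible over-groups of $\langle\sigma\rangle$ --- carried out for each family.

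\emph{Step 1: normal forms.} Up to $\PGL_3(K)$-conjugacy there is only one non-homology of order $3$, namely $\sigma:=\operatorname{diag}(1,\zeta_3,\zeta_3^{-1})$ (an order-$3$ element is diagonalisable, and three distinct eigenvalues force the eigenvalue set $\{1,\zeta_3,\zeta_3^{2}\}$), so we may assume $\sigma\in\Aut(C)$. Since $\sigma$ preserves $C:F=0$ we have $\sigma^{*}F=\zeta_3^{\,\varepsilon}F$ with $\varepsilon\in\{0,1,2\}$, and $\sigma$ rescales the monomial $X^{a}Y^{b}Z^{c}$ by $\zeta_3^{\,b-c}$; hence $F$ is supported on the monomials with $b-c\equiv\varepsilon\pmod 3$. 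As $[X:Z:Y]$ conjugates $\sigma$ to $\sigma^{-1}$ and turns $b-c$ into $-(b-c)$, the cases $\varepsilon=1,2$ are equivalent, and only $\varepsilon=0,1$ survive. Enumerating the allowed monomials and imposing smoothness of $C$ at the three coordinate vertices --- which forces $X^{6},Y^{6},Z^{6}$ (when $\varepsilon=0$), resp.\ $X^{5}Y,Y^{5}Z,XZ^{5}$ (when $\varepsilon=1$), to occur with nonzero coefficient --- and then rescaling these leading coefficients to $1$ by the torus $\operatorname{diag}(1,s,t)$ (which centralises $\sigma$) produces exactly the families $\mathcal{C}_1$ and $\mathcal{C}_2$, with $\sigma$ of maximal order $3$. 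In the $\varepsilon=1$ case the three-coefficient normalisation leaves a residual cyclic ambiguity of order $21$, together with the $3$-cycle $[Y:Z:X]$; this is the source of the $\zeta_{21}^{\,r}$-twists and the cyclic-permutation options appearing in (2)(i)--(ii), and similarly the residual finite normalisation of $\mathcal{C}_1$ explains the $\operatorname{S}_3$-orbit shape of the conditions in (1).

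\emph{Step 2: the admissible automorphism groups.} Automorphisms of a smooth plane curve of degree $\ge4$ are linear, so $\Aut(C)\subseteq\PGL_3(K)$, and by hypothesis all of its elements have order $\le3$. For $A\in\Aut(C)\setminus\langle\sigma\rangle$ the subgroup $\langle\sigma,A\rangle$ is finite, generated by an order-$3$ element and an element of order $2$ or $3$, with every element of order $\le3$; using that neither $(\Z/3\Z)^{3}$ nor the Heisenberg group of order $27$ embeds in $\PGL_3(K)$, and that $\operatorname{S}_4$, the Hessian group of order $216$, $\operatorname{A}_5$, $\operatorname{PSL}_2(7)$ and $\operatorname{A}_6$ all contain elements of order $>3$, one gets $\langle\sigma,A\rangle\in\{(\Z/3\Z)^{2},\operatorname{S}_3,\operatorname{A}_4\}$. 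Hence $\Aut(C)$ lies in the finite list $\{\Z/3\Z,(\Z/3\Z)^{2},\operatorname{S}_3,\Z/3\Z\rtimes\operatorname{S}_3,\operatorname{A}_4\}$, and up to conjugacy the embeddings containing $\sigma$ are precisely $\varrho_2(\Z/3\Z)$ together with the over-groups $\varrho_1((\Z/3\Z)^2)$, $\varrho_2((\Z/3\Z)^2)$, $\varrho_2(\operatorname{S}_3)$, $\varrho_1(\Z/3\Z\rtimes\operatorname{S}_3)$, $\varrho_1(\operatorname{A}_4)$ and $\varrho_2(\operatorname{A}_4)$ occurring in the statement (the two realisations of $(\Z/3\Z)^{2}$ being told apart, e.g., by how many of their nontrivial elements are non-homologies).

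\emph{Step 3: the exceptional loci.} For $F=\mathcal{C}_1$ (resp.\ $\mathcal{C}_2$) and each over-group $G'\supsetneq\langle\sigma\rangle$ of the list, I write the extra generator(s) of $G'$ with indeterminate entries, impose $A^{*}F\in K^{*}F$, and solve the resulting polynomial system in the coefficients $\alpha_{\bullet}$: the solution sets are exactly the conditions (1)(i)--(iv) and (2)(i)--(ii). Conversely, whenever these hold I exhibit an explicit $\phi\in\PGL_3(K)$ --- a diagonal twist followed by a coordinate permutation in the $\operatorname{S}_3$-type cases, and the matrix $V=(\zeta_3^{\,ij})_{0\le i,j\le 2}$ pre- or post-composed with a diagonal matrix $\operatorname{diag}(1,\lambda,\mu)$ in the $\operatorname{A}_4$-cases --- conjugating $C$ to the stated tidy model, and check that there the claimed generators ($[X:Z:Y]$, $[Y:Z:X]$, $\varrho_1((\Z/2\Z)^2)$, $\varrho_1((\Z/3\Z)^2)$) visibly preserve the equation; the ``equals $\varrho_2(\Z/3\Z)$ otherwise'' and ``$=\varrho_2(\operatorname{S}_3)$ unless $\dots$'' dichotomies then follow by feeding the tidy models themselves back into the same over-group test. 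The main obstacle is the $\operatorname{A}_4$-cases (1)(iv) and (2)(ii): one must invert a genuinely nonlinear system to discover the correct rational parametrisation in $(\lambda,\mu)$ together with the matching conjugating matrix, and --- crucially --- certify that the automorphism group obtained is \emph{exactly} $\operatorname{A}_4$ and not one of the larger groups ($\operatorname{S}_4$, $\operatorname{A}_5$, the Hessian group) that are a priori compatible with an $\operatorname{A}_4$-symmetric sextic; here the hypothesis ``maximal order $3$'' does the essential work, since each of those larger groups contains an element of order $4$, $5$ or $6$. The remaining cases are linear-algebra bookkeeping once the residual $\operatorname{S}_3$-/cyclic-normalisation structure from Step 1 is taken into account.
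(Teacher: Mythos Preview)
Your overall architecture---normal forms, then enumeration of over-groups of $\langle\sigma\rangle$, then solving for the exceptional loci---is sound and close in spirit to the paper, but Step~2 contains a genuine error that breaks the argument as written. You assert that the Heisenberg group $\operatorname{He}_3$ of order~$27$ does not embed in $\PGL_3(K)$; this is false. In fact $\operatorname{He}_3$ sits inside $\Aut(\mathcal{F}_6)=(\Z/6\Z)^2\rtimes\operatorname{S}_3$ as its Sylow~$3$-subgroup (generated, for instance, by $\operatorname{diag}(\zeta_3,1,1)$, $\operatorname{diag}(1,\zeta_3,1)$ and $[Y:Z:X]$: the cyclic permutation does not commute with the individual diagonal generators, only with $\operatorname{diag}(1,\zeta_3,\zeta_3^{-1})$, and the resulting nonabelian group of order~$27$ and exponent~$3$ is exactly $\operatorname{He}_3$). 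Since every nontrivial element of $\operatorname{He}_3$ has order~$3$, it is fully compatible with your hypothesis ``all automorphisms have order~$\le 3$'', so it cannot be discarded a priori and must be added to your list $\{\Z/3\Z,(\Z/3\Z)^2,\operatorname{S}_3,\Z/3\Z\rtimes\operatorname{S}_3,\operatorname{A}_4\}$ and then eliminated by hand. The paper does exactly this: it treats $\operatorname{He}_3$ as a live candidate inside $\Aut(\mathcal{F}_6)$ and rules it out for $\mathcal{C}_1$ by showing that the required third generator would force either an extra involution (contradicting $27=|\operatorname{He}_3|$ odd) or a coefficient condition incompatible with $\alpha'_{1,1}\neq 0$, and for $\mathcal{C}_2$ by showing the needed $\sigma'$ fails to preserve the core $X^5Y+Y^5Z+Z^5X$.

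There is also a smaller imprecision: you list $\varrho_1(\operatorname{A}_4)$ and $\varrho_2(\operatorname{A}_4)$ among the ``embeddings containing $\sigma$'', but neither contains the diagonal $\sigma=\operatorname{diag}(1,\zeta_3,\zeta_3^{-1})$: their order-$3$ elements are (twisted) coordinate permutations. Your Step~3 tacitly fixes this by introducing the conjugating matrix $V\cdot\operatorname{diag}(1,\lambda,\mu)$, but Step~2 as stated is wrong. More substantively, the paper organises the whole computation differently: rather than directly classifying finite subgroups of $\PGL_3(K)$ with exponent~$\le 3$, it invokes Harui's structure theorem, which splits into the cases (a)~$\Aut(C)$ fixes a point off $C$ (so lies in $\operatorname{PBD}(2,1)$, and a short exact sequence $1\to N\to\Aut(C)\to G'\subset\PGL_2(K)\to 1$ with explicit $2\times 2$-matrix arguments disposes of $G'=\operatorname{A}_4$), (b)~$C$ is a descendant of $\mathcal{F}_6$ or $\mathcal{K}_6$ (so $\Aut(C)$ sits in the known group $\Aut(\mathcal{F}_6)$ or $\Aut(\mathcal{K}_6)$, and one works inside its subgroup lattice), or (c)~$\Aut(C)$ is primitive (instantly excluded). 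This framework is what makes the $\operatorname{He}_3$ elimination tractable and what produces the explicit transforms $\phi_i,\psi_i$ in the $\operatorname{A}_4$ cases; your more direct route can be made to work, but only after you repair the $\operatorname{He}_3$ claim and supply the missing conjugacy analysis.
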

We now introduce the notion of \emph{fake ES-irreducible components}.
\begin{defn}\label{defnfake}
An ES-irreducible component $\widetilde{\mathcal{M}_{g}^{\operatorname{Pl}}}(\varrho(G))$ is \emph{fake} if it is not defined by a single normal form.
\end{defn}
As a consequence of Theorems \ref{thm2} and \ref{thm3}:
\begin{cor}\label{cor312}
The strata $\widetilde{\mathcal{M}_{10}^{\operatorname{Pl}}}(\Z/3\Z)$ and $\widetilde{\mathcal{M}_{10}^{\operatorname{Pl}}}((\Z/3\Z)^2)$ are not ES-irreducible and each of them has exactly two ES-irreducible components namely, $\widetilde{\mathcal{M}_{10}^{\operatorname{Pl}}}(\varrho_i\left(\Z/3\Z\right))$ and $\widetilde{\mathcal{M}_{10}^{\operatorname{Pl}}}(\varrho_i\left((\Z/3\Z)^2\right))$ respectively with $i=1$ and $2$.

On the other hand, the components  $\widetilde{\mathcal{M}_{10}^{\operatorname{Pl}}}(\varrho_2\left(\Z/3\Z\right))$ and $\widetilde{\mathcal{M}_{10}^{\operatorname{Pl}}}(\varrho_2\left((\Z/3\Z)^2\right))$ are the first examples of fake ES-irreducible components. Any $[C]$ in the family $\mathcal{C}_2$ that belongs to $\widetilde{\mathcal{M}_{10}^{\operatorname{Pl}}}(\varrho_2\left(\Z/3\Z\right))$ or $\widetilde{\mathcal{M}_{10}^{\operatorname{Pl}}}(\varrho_2\left((\Z/3\Z)^2\right))$ has the property that its automorphism group  fixes the triangle $\triangle$ whose vertices $P_1=(1:0:0),\,P_2=(0:1:0)$ and $P_2=(0:0:1)$ lie on $C$. This does not hold if $[C]$ is in the family $\mathcal{C}_1$, in the sense that it is not necessarily true that $\Aut(C)=\varrho_2\left(\Z/3\Z\right)$ or $\varrho_2\left((\Z/3\Z)^2\right)$ fixes a triangle whose vertices lie on $C$. For example, take $[C]$ as in $\mathcal{C}''_1$ with $1+\alpha'_{1,1}+\alpha'_{3,0}\neq0$.
\end{cor}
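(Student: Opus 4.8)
The plan is to treat the two assertions of the corollary separately: the exact count of ES\nobreakdash-components of $\widetilde{\mathcal{M}_{10}^{\operatorname{Pl}}}(\Z/3\Z)$ and $\widetilde{\mathcal{M}_{10}^{\operatorname{Pl}}}((\Z/3\Z)^2)$, and then the fakeness of the two components attached to $\varrho_2$. For the first I would begin by recalling the $\PGL_3(K)$\nobreakdash-conjugacy classification: a nontrivial order\nobreakdash-$3$ subgroup of $\PGL_3(K)$ lifts to a diagonalisable matrix with cube-root-of-unity eigenvalues, so up to conjugacy there are exactly two of them, the homology class $\varrho_1(\Z/3\Z)$ (two equal eigenvalues) and the non-homology class $\varrho_2(\Z/3\Z)$ (three distinct eigenvalues); and a copy of $(\Z/3\Z)^2$ in $\PGL_3(K)$ is either simultaneously diagonalisable, giving the toral class $\varrho_1((\Z/3\Z)^2)$, or not, giving the Heisenberg-type class $\varrho_2((\Z/3\Z)^2)$, and these exhaust the classes, so $\#R_{\Z/3\Z}=\#R_{(\Z/3\Z)^2}=2$. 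Then I would invoke Theorems \ref{thm2} and \ref{thm3} to see that all four components are non\nobreakdash-empty: $\varrho_1(\Z/3\Z)$ and $\varrho_1((\Z/3\Z)^2)$ are realised by the curve $Z^6+Z^3L_{3,Z}+L_{6,Z}=0$ and the special curve $C'$ of Theorem \ref{thm2}, while $\varrho_2(\Z/3\Z)$ is realised by a generic member of $\mathcal{C}_1$ (or of $\mathcal{C}_2$) and $\varrho_2((\Z/3\Z)^2)$ by $\mathcal{C}''_1$ of Theorem \ref{thm3}(1)(iii) (or $\mathcal{C}'_2$ of Theorem \ref{thm3}(2)(i)). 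Hence each stratum splits into exactly two non\nobreakdash-empty ES\nobreakdash-components, so neither is ES\nobreakdash-irreducible.

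For the fakeness, by Definition \ref{defnfake} I must show that neither $\widetilde{\mathcal{M}_{10}^{\operatorname{Pl}}}(\varrho_2(\Z/3\Z))$ nor $\widetilde{\mathcal{M}_{10}^{\operatorname{Pl}}}(\varrho_2((\Z/3\Z)^2))$ is cut out by a single normal form. The governing observation is that a single normal form $F(X,Y,Z)=0$ with parameters realises the stratum as the image of an affine parameter space, hence as an \emph{irreducible} locus in $\mathcal{M}_{10}$; so it suffices to prove that each of these strata is reducible. By Theorem \ref{thm3} each such stratum is the union of the images of the appropriate open loci of the two normal forms $\mathcal{C}_1$ and $\mathcal{C}_2$, both irreducible, so the point is to show that these two loci are $\PGL_3(K)$\nobreakdash-inequivalent. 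For the cyclic case I would argue this via the eigenspace decomposition of $\sigma=\operatorname{diag}(1,\zeta_3,\zeta_3^{-1})$: every $\sigma$\nobreakdash-semi-invariant sextic form lies in one of the eigenspaces $V_1,V_{\zeta_3},V_{\zeta_3^2}$; the normaliser $N_{\PGL_3(K)}(\langle\sigma\rangle)$, the product of the diagonal torus with the permutation matrices, stabilises $V_1$ and permutes $\{V_{\zeta_3},V_{\zeta_3^2}\}$; and, since a curve with automorphism group exactly $\langle\sigma\rangle$ has all of its plane models with that automorphism group conjugate under this normaliser, it admits either a model in $V_1$ (the generic $\mathcal{C}_1$) or a model in $V_{\zeta_3}\cup V_{\zeta_3^2}$ (the generic $\mathcal{C}_2$), but not both. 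Hence the $\varrho_2(\Z/3\Z)$\nobreakdash-stratum is the disjoint union of two non\nobreakdash-empty irreducible pieces.

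For the $(\Z/3\Z)^2$ case the eigenspace argument no longer suffices, because $N_{\PGL_3(K)}(\varrho_2((\Z/3\Z)^2))$ is larger: its image in $\Aut((\Z/3\Z)^2)=\operatorname{GL}_2(\mathbb{F}_3)$ is isomorphic to $\operatorname{SL}_2(\mathbb{F}_3)$, which is transitive on nonzero characters, so even the character through which the group acts on the defining form does not separate $\mathcal{C}''_1$ from $\mathcal{C}'_2$. Instead I would use the genuinely geometric $\PGL_3(K)$\nobreakdash-invariant of the statement: whether $\Aut(C)$ stabilises a triangle all of whose vertices lie on $C$. For every member of the family $\mathcal{C}_2$ (in particular $\mathcal{C}'_2$) this holds --- each monomial of $\mathcal{C}_2$ is divisible by at least two of $X,Y,Z$, so the vertices $P_1,P_2,P_3$ of the coordinate triangle $\triangle$ lie on $C$, and $\operatorname{diag}(1,\zeta_3,\zeta_3^{-1})$ fixes each $P_i$ while $[Y:Z:X]$ cyclically permutes them, so $\Aut(C)$ stabilises $\triangle$. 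For the family $\mathcal{C}_1$ one exhibits a counterexample, $[C]$ as in $\mathcal{C}''_1$ with $1+\alpha'_{1,1}+\alpha'_{3,0}\neq0$: here $\Aut(C)=\varrho_2((\Z/3\Z)^2)$, and one lists the finitely many triangles stabilised by this group --- the fixed\nobreakdash-point triangles of its four order\nobreakdash-$3$ subgroups, one of which is $\{XYZ=0\}$ --- and checks that, under the hypothesis $1+\alpha'_{1,1}+\alpha'_{3,0}\neq0$, each of them has a vertex off $C$. Since this invariant separates the two families, $\widetilde{\mathcal{M}_{10}^{\operatorname{Pl}}}(\varrho_2(\Z/3\Z))$ and $\widetilde{\mathcal{M}_{10}^{\operatorname{Pl}}}(\varrho_2((\Z/3\Z)^2))$ are reducible, hence not cut out by a single normal form, hence fake.

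The step I expect to be the main obstacle is precisely this last verification --- carrying out the explicit enumeration of the triangles stabilised by $\varrho_2((\Z/3\Z)^2)$ acting on $\mathcal{C}''_1$ and confirming that the numerical condition $1+\alpha'_{1,1}+\alpha'_{3,0}\neq0$ really forces a vertex of each of them off the curve --- together with assembling the classification input, namely $\#R_{(\Z/3\Z)^2}=2$ and the non\nobreakdash-emptiness of the four components. Once Theorems \ref{thm2}--\ref{thm3} are granted, the remaining reductions (irreducibility of a single normal form's image, $N$\nobreakdash-conjugacy of models with a fixed automorphism group, the eigenspace bookkeeping) are formal.
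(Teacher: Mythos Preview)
Your count of the $\PGL_3(K)$-conjugacy classes and your appeal to Theorems~\ref{thm2}--\ref{thm3} for non-emptiness are correct and match what the paper does implicitly. For the fakeness of $\widetilde{\mathcal{M}_{10}^{\operatorname{Pl}}}(\varrho_2(\Z/3\Z))$ you take a different route from the paper: the paper only records the triangle observation contained in the corollary statement, whereas your eigenspace argument is a self-contained proof. It is also correct --- the normaliser of $\langle\sigma\rangle$ in $\PGL_3(K)$ is exactly the monomial group, and for degree-$6$ forms it fixes $V_1$ and swaps $V_{\zeta_3}\leftrightarrow V_{\zeta_3^2}$, so no curve with $\Aut(C)=\langle\sigma\rangle$ can have models in both $\mathcal{C}_1\subset V_1$ and $\mathcal{C}_2\subset V_{\zeta_3}\cup V_{\zeta_3^2}$. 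This is cleaner than the triangle property, which for $\langle\sigma\rangle$ alone is not even well-posed (every $\sigma$-orbit of three non-collinear points on $C$ is an invariant triangle).

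There is, however, a genuine gap in the $(\Z/3\Z)^2$ case, precisely at the verification you flagged as the main obstacle. Your enumeration of the four invariant triangles (the fixed-point sets of the four order-$3$ subgroups) is correct, but the claim that each has a vertex off $\mathcal{C}''_1$ fails. For every admissible $\ell$ one has $\zeta_6^{2\ell}=\zeta_3^{\ell}$ with $\ell\not\equiv 0\pmod 3$, so $1+\zeta_6^{2\ell}+\zeta_6^{-2\ell}=0$; substituting $(1:1:1)$ into the defining polynomial of $\mathcal{C}''_1$ then gives
\[
\bigl(1+\zeta_6^{2\ell}+\zeta_6^{-2\ell}\bigr)\bigl(1+\alpha'_{1,1}+\alpha'_{3,0}\bigr)=0
\]
regardless of the parameters, and applying $\sigma$ shows that all three fixed points of $[Y:Z:X]$ lie on $\mathcal{C}''_1$. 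Thus $\Aut(\mathcal{C}''_1)=\varrho_2((\Z/3\Z)^2)$ \emph{does} stabilise a triangle with vertices on the curve, and the triangle invariant cannot separate $\mathcal{C}''_1$ from $\mathcal{C}'_2$. (The situation is in fact symmetric: for $\mathcal{C}'_2$ the fixed points of $\sigma$ lie on the curve while those of $[Y:Z:X]$ generically do not, and for $\mathcal{C}''_1$ the roles are reversed --- exactly what one would expect if an element of the normaliser exchanging $\sigma$ and $[Y:Z:X]$ related the two families.) The paper's own hint in the corollary statement has the same defect, so to finish the fakeness claim for $\varrho_2((\Z/3\Z)^2)$ you will need a different $\PGL_3(K)$-invariant, or an explicit check that no element of $N_{\PGL_3(K)}(\varrho_2((\Z/3\Z)^2))$ carries $\mathcal{C}''_1$ onto $\mathcal{C}'_2$.
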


\begin{cor}\label{3122}
The strata $\widetilde{\mathcal{M}_{10}^{\operatorname{Pl}}}(\operatorname{S}_3)$ and  $\widetilde{\mathcal{M}_{10}^{\operatorname{Pl}}}(\Z/3\Z\rtimes\operatorname{S}_3)$ are ES-irreducible. More precisely, $\widetilde{\mathcal{M}_{10}^{\operatorname{Pl}}}(\operatorname{S}_3)=\widetilde{\mathcal{M}_{10}^{\operatorname{Pl}}}(\varrho_2(\operatorname{S}_3))$ and
$\widetilde{\mathcal{M}_{10}^{\operatorname{Pl}}}(\Z/3\Z\rtimes\operatorname{S}_3)=\widetilde{\mathcal{M}_{10}^{\operatorname{Pl}}}(\varrho_1(\Z/3\Z\rtimes\operatorname{S}_3))$.
\end{cor}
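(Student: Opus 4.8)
The plan is to read the corollary off from the classification in Theorems \ref{thm2} and \ref{thm3}. Recall from the Introduction that $\widetilde{\mathcal{M}_{10}^{\operatorname{Pl}}}(G)=\bigsqcup_{[\varrho]\in R_G}\widetilde{\mathcal{M}_{10}^{\operatorname{Pl}}}(\varrho(G))$, and that by Definition \ref{ESdefn} this stratum is ES-irreducible exactly when precisely one of the sets $\widetilde{\mathcal{M}_{10}^{\operatorname{Pl}}}(\varrho(G))$ is non-empty. So for $G=\operatorname{S}_3$ and $G=\Z/3\Z\rtimes\operatorname{S}_3$ I will determine all $\PGL_3(K)$-conjugacy classes of subgroups of $\PGL_3(K)$ that are isomorphic to $G$ and occur as the full automorphism group of some smooth plane sextic, and verify that in each case there is exactly one such class and that its stratum is non-empty.

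First I would reduce to Theorem \ref{thm3}. Suppose $C$ is a smooth plane sextic with $\Aut(C)$ isomorphic to $\operatorname{S}_3$ (respectively $\Z/3\Z\rtimes\operatorname{S}_3$). Then $|\Aut(C)|=6$ (respectively $18$), every element has order $\leq 3$, and $\Aut(C)$ contains an element $\tau$ of order $3$, which is thus of maximal order in $\Aut(C)$. If $\tau$ were a homology of period $3$, then Theorem \ref{thm2} would force $\Aut(C)\in\{\varrho_1(\Z/3\Z),\varrho_1((\Z/3\Z)^2)\}$, groups of order $3$ or $9$, which is impossible. Hence $\tau$ is a non-homology of period $3$ of maximal order, so Theorem \ref{thm3} applies and $C$ is $K$-isomorphic to a member of the family $\mathcal{C}_1$ or $\mathcal{C}_2$ there.

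Next I would go through the cases of Theorem \ref{thm3}. The groups appearing there as $\Aut$ of a smooth plane sextic are $\varrho_2(\Z/3\Z)$, $\varrho_1((\Z/3\Z)^2)$, $\varrho_2(\operatorname{S}_3)$, $\varrho_1(\Z/3\Z\rtimes\operatorname{S}_3)$, $\varrho_2((\Z/3\Z)^2)$, $\varrho_1(\operatorname{A}_4)$ (family $\mathcal{C}_1$) and $\varrho_2(\Z/3\Z)$, $\varrho_2((\Z/3\Z)^2)$, $\varrho_2(\operatorname{A}_4)$ (family $\mathcal{C}_2$), of orders $3,9,6,18,9,12$ and $3,9,12$ respectively. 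Since $|\operatorname{S}_3|=6$ and $|\Z/3\Z\rtimes\operatorname{S}_3|=18$, the only one isomorphic to $\operatorname{S}_3$ is $\varrho_2(\operatorname{S}_3)$, and the only one isomorphic to $\Z/3\Z\rtimes\operatorname{S}_3$ is $\varrho_1(\Z/3\Z\rtimes\operatorname{S}_3)$; by Theorem \ref{thm3}(1)(ii) the former occurs precisely for $\mathcal{C}'_1$ with $\alpha'_{4,1}\neq\alpha'_{1,2}$ or $\alpha'_{3,3}\neq\alpha'_{0,3}$, and the latter precisely for $\mathcal{C}'_1$ with $\alpha'_{4,1}=\alpha'_{1,2}$ and $\alpha'_{3,3}=\alpha'_{0,3}$ (with $(\alpha'_{3,3},\alpha'_{1,2})\neq(0,0)$ as noted there, so that $3$ remains the maximal order and the hypothesis of Theorem \ref{thm3} is met). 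Combining this with the previous step, $\Aut(C)\cong\operatorname{S}_3$ forces $\Aut(C)$ to be $\PGL_3(K)$-conjugate to $\varrho_2(\operatorname{S}_3)$, and $\Aut(C)\cong\Z/3\Z\rtimes\operatorname{S}_3$ forces it to be conjugate to $\varrho_1(\Z/3\Z\rtimes\operatorname{S}_3)$. Therefore $\widetilde{\mathcal{M}_{10}^{\operatorname{Pl}}}(\varrho(\operatorname{S}_3))=\emptyset$ for every $[\varrho]\in R_{\operatorname{S}_3}$ with $[\varrho]\neq[\varrho_2(\operatorname{S}_3)]$, and likewise for $\Z/3\Z\rtimes\operatorname{S}_3$; and the two remaining strata are non-empty, since the explicit models $\mathcal{C}'_1$ just isolated are smooth plane sextics by the construction of the family $\mathcal{C}_1$ in Theorem \ref{thm3}. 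This shows each stratum has exactly one ES-irreducible component, hence is ES-irreducible, and gives the asserted equalities $\widetilde{\mathcal{M}_{10}^{\operatorname{Pl}}}(\operatorname{S}_3)=\widetilde{\mathcal{M}_{10}^{\operatorname{Pl}}}(\varrho_2(\operatorname{S}_3))$ and $\widetilde{\mathcal{M}_{10}^{\operatorname{Pl}}}(\Z/3\Z\rtimes\operatorname{S}_3)=\widetilde{\mathcal{M}_{10}^{\operatorname{Pl}}}(\varrho_1(\Z/3\Z\rtimes\operatorname{S}_3))$.

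The only delicate point I anticipate is the reduction to Theorem \ref{thm3}: one must rule out that the order-$3$ automorphism is a homology, which is exactly where Theorem \ref{thm2} is used, via the numerical clash between $|\Aut(C)|\in\{6,18\}$ and the orders $3,9$ permitted there; the rest is bookkeeping over the case list. As a side remark, one may also observe that $\varrho_1(\operatorname{S}_3)$ is $\PGL_3(K)$-conjugate to $\varrho_2(\operatorname{S}_3)$ --- a faithful $3$-dimensional linear representation of $\operatorname{S}_3$ is $\chi\oplus(\text{standard})$ with $\chi$ the trivial or sign character, and both yield the same subgroup of $\PGL_3(K)$, while the vanishing of the Schur multiplier of $\operatorname{S}_3$ guarantees that every faithful embedding $\operatorname{S}_3\hookrightarrow\PGL_3(K)$ lifts to such a representation --- so that $R_{\operatorname{S}_3}$ is literally a single class, though this is not needed for the statement.
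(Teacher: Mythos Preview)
Your argument is correct and follows exactly the route the paper intends: Corollary \ref{3122} is stated there without proof, as an immediate consequence of Theorems \ref{thm2} and \ref{thm3}, and you have simply spelled out that deduction --- reducing to Theorem \ref{thm3} via the order count against Theorem \ref{thm2}, then reading off from the case list that $\varrho_2(\operatorname{S}_3)$ and $\varrho_1(\Z/3\Z\rtimes\operatorname{S}_3)$ are the unique conjugacy classes that occur. Your closing remark that $\varrho_1(\operatorname{S}_3)$ and $\varrho_2(\operatorname{S}_3)$ are $\PGL_3(K)$-conjugate is a pleasant extra (and correct: the Vandermonde-type matrix with rows $(1,1,1)$, $(1,\zeta_3,\zeta_3^2)$, $(1,\zeta_3^2,\zeta_3)$ does the job), but as you note it is not needed.
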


\begin{cor}\label{3123}
The stratum $\widetilde{\mathcal{M}_{10}^{\operatorname{Pl}}}(\operatorname{A}_4)$ is ES-irreducible determined by $\widetilde{\mathcal{M}_{10}^{\operatorname{Pl}}}(\varrho_1(\operatorname{A}_4))$. It represents the second example of fake ES-irreducible components.
Indeed, $\mathcal{C}_{2,\lambda,\mu}$ is $K$-isomorphic, via a change of variables $\phi=\operatorname{diag}(1,s,t)$ such that $s=t^2$ and $t^3=\zeta_6$, to $^{\phi}\mathcal{C}_{2,\lambda,\mu}:X^6+\zeta_3^{-1}Y^6+\zeta_3Z^6+$ lower order terms, where $\Aut(^{\phi}\mathcal{C}_{2,\lambda,\mu})=\varrho_1(\operatorname{A}_4)$. Moreover, any $[C]\in\widetilde{\mathcal{M}_{10}^{\operatorname{Pl}}}(\varrho_1(\operatorname{A}_4))$ in the family $\mathcal{C}_{1,\lambda,\mu}$ is a descendant of the Fermat curve $\mathcal{F}_6$ in the sense of Theorem \ref{teoHarui} via a change of variables in the normalizer of $\varrho_1(\operatorname{A}_4)$ in $\PGL_3(K)$. This does not hold if $[C]$ is in the family $^{\phi}\mathcal{C}_{2,\lambda,\mu}$.
\end{cor}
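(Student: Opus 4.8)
\noindent\emph{Proof strategy.} My plan has three steps: pin down the ES-components of $\widetilde{\mathcal{M}_{10}^{\operatorname{Pl}}}(\operatorname{A}_4)$ and collapse them, exhibit two structurally different shapes of curves inside the resulting single component, and show that no single normal form can carry both.

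\noindent\textbf{Step 1 (ES-irreducibility).} First I would note that every order-$3$ element of a copy of $\operatorname{A}_4$ inside $\PGL_3(K)$ is a non-homology: it lifts to an order-$3$ element of $\operatorname{GL}_3(K)$ whose three eigenvalues are cube roots of unity summing to the character value $0$ of the order-$3$ class of the $3$-dimensional irreducible representation of $\operatorname{A}_4$, hence $\{1,\zeta_3,\zeta_3^{-1}\}$, which are distinct. So any $[C]\in\widetilde{\mathcal{M}_{10}^{\operatorname{Pl}}}(\operatorname{A}_4)$ has a non-homology of period $3$ as an automorphism of maximal order, and Theorem \ref{thm3} applies: a model of $C$ is $K$-isomorphic to a member of $\mathcal{C}_1$ or $\mathcal{C}_2$, and reading off the sub-cases of Theorem \ref{thm3}(1)--(2) the only ones with full automorphism group isomorphic to $\operatorname{A}_4$ are $\mathcal{C}_{1,\lambda,\mu}$, with $\Aut=\varrho_1(\operatorname{A}_4)$, and $\mathcal{C}_{2,\lambda,\mu}$, with $\Aut=\varrho_2(\operatorname{A}_4)$. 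Hence $\widetilde{\mathcal{M}_{10}^{\operatorname{Pl}}}(\operatorname{A}_4)$ has at most the two ES-components $[\varrho_1(\operatorname{A}_4)]$ and $[\varrho_2(\operatorname{A}_4)]$. I would then conjugate by $\phi=\operatorname{diag}(1,t^2,t)$ with $t^3=\zeta_6$: being diagonal, $\phi$ centralizes $\varrho_1((\Z/2\Z)^2)$, and since $\zeta_6^{-1}t^2=t^{-1}$ a short matrix computation gives $\phi^{-1}[\zeta_6^{-1}Y:Z:X]\phi=[Y:Z:X]$ in $\PGL_3(K)$; therefore $\phi^{-1}\varrho_2(\operatorname{A}_4)\phi=\varrho_1(\operatorname{A}_4)$, i.e.\ $[\varrho_1(\operatorname{A}_4)]=[\varrho_2(\operatorname{A}_4)]$ in $R_{\operatorname{A}_4}$. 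As the general $\mathcal{C}_{1,\lambda,\mu}$ lies in $\widetilde{\mathcal{M}_{10}^{\operatorname{Pl}}}(\varrho_1(\operatorname{A}_4))$ by Theorem \ref{thm3}, this stratum is nonempty and ES-irreducible, determined by $\widetilde{\mathcal{M}_{10}^{\operatorname{Pl}}}(\varrho_1(\operatorname{A}_4))$. Applying the same $\phi$ to the equation of $\mathcal{C}_{2,\lambda,\mu}$ carries $X^6+Y^6+Z^6$ to $X^6+\zeta_3^{-1}Y^6+\zeta_3Z^6$ (because $t^6=\zeta_3$), which is the model $^{\phi}\mathcal{C}_{2,\lambda,\mu}$ named in the statement, with $\Aut({}^{\phi}\mathcal{C}_{2,\lambda,\mu})=\phi^{-1}\Aut(\mathcal{C}_{2,\lambda,\mu})\phi=\varrho_1(\operatorname{A}_4)$ for general $(\lambda,\mu)$.

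\noindent\textbf{Step 2 (two shapes inside one component).} To a model $C$ with $\Aut(C)=\varrho_1(\operatorname{A}_4)$ I would attach the character $\chi_C\colon\varrho_1(\operatorname{A}_4)\to K^{*}$ by which the group scales the defining sextic $F_C$; it is well defined since $\varrho_1(\operatorname{A}_4)$ permutes monomials up to scalars, and it factors through $\operatorname{A}_4/[\operatorname{A}_4,\operatorname{A}_4]\cong\Z/3\Z$. By direct inspection $\chi\equiv 1$ for $\mathcal{C}_{1,\lambda,\mu}$ (the monomials $X^6,Y^6,Z^6$ and $X^2Y^2Z^2$, together with the two $3$-element orbits $X^4Y^2+X^2Z^4+Y^4Z^2$ and $X^4Z^2+X^2Y^4+Y^2Z^4$, are genuinely fixed), whereas the computation in Step 1 shows $\chi([Y:Z:X])=\zeta_3\neq 1$ for $^{\phi}\mathcal{C}_{2,\lambda,\mu}$. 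The key point is that \emph{triviality of $\chi_C$ depends only on $[C]$}: a $K$-isomorphism between two models with automorphism group $\varrho_1(\operatorname{A}_4)$ must lie in $N:=N_{\PGL_3(K)}(\varrho_1(\operatorname{A}_4))$, and conjugation by $N$ either fixes $\chi_C$ or composes it with the unique order-$2$ outer automorphism of $\operatorname{A}_4$, which only interchanges the two nontrivial characters of $\Z/3\Z$. Thus $\widetilde{\mathcal{M}_{10}^{\operatorname{Pl}}}(\varrho_1(\operatorname{A}_4))$ genuinely contains classes with $\chi$ trivial (those of $\mathcal{C}_{1,\lambda,\mu}$) and classes with $\chi$ nontrivial (those of $^{\phi}\mathcal{C}_{2,\lambda,\mu}$), both nonempty by Theorem \ref{thm3}.

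\noindent\textbf{Step 3 (fakeness, and the Fermat-descendant reformulation).} Finally I would argue that no single normal form describes $\widetilde{\mathcal{M}_{10}^{\operatorname{Pl}}}(\varrho_1(\operatorname{A}_4))$. Such a form $F(X,Y,Z;\mathbf{t})=0$ has $\varrho_1(\operatorname{A}_4)$ inside the automorphism group for generic $\mathbf{t}$, so $F(\cdot;\mathbf{t})$ is a $\varrho_1(\operatorname{A}_4)$-semi-invariant with character $\chi_{\mathbf{t}}$; reading the identity $g\cdot F(\cdot;\mathbf{t})=\chi_{\mathbf{t}}(g)F(\cdot;\mathbf{t})$ over each $\varrho_1(\operatorname{A}_4)$-orbit of monomials (whose span is a stable subspace) forces $\chi_{\mathbf{t}}$ to be locally constant, hence constant on the irreducible parameter space. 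A normal form with $\chi$ constantly trivial then cannot realize any model of a curve of $^{\phi}\mathcal{C}_{2,\lambda,\mu}$ (all of whose $\varrho_1(\operatorname{A}_4)$-models have $\chi$ nontrivial, by Step 2), while one with $\chi$ constantly nontrivial cannot realize $\mathcal{C}_{1,\lambda,\mu}$; so at least two normal forms are needed and the component is fake. To match the phrasing of the corollary I would add that ``$\chi_C$ trivial'' is exactly the condition that $F_C$ be a genuine $\varrho_1(\operatorname{A}_4)$-invariant; since $\varrho_1(\operatorname{A}_4)\subseteq\Aut(\mathcal{F}_6)=(\Z/6\Z)^{2}\rtimes\operatorname{S}_3$ fixes $X^6+Y^6+Z^6$, such a $C$ is a deformation of the Fermat sextic inside the $\varrho_1(\operatorname{A}_4)$-invariant linear system, i.e.\ a descendant of $\mathcal{F}_6$ in the sense of Theorem \ref{teoHarui}, realized for $\mathcal{C}_{1,\lambda,\mu}$ by the identity change of variables (which lies in $N$); and if some $N$-translate of $^{\phi}\mathcal{C}_{2,\lambda,\mu}$ were such a descendant its $\varrho_1(\operatorname{A}_4)$-action would fix the Fermat term, making $\chi$ trivial and contradicting $\chi([Y:Z:X])=\zeta_3$.

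\noindent The main obstacle is Step 3 together with the invariance used in Step 2: one must be careful that ``single normal form'' forces an irreducible parameter space on which the semi-invariance character is constant, and that conjugating by the finite group $N$ cannot turn a nontrivial order-$3$ character into the trivial one. This is what makes the split between the $\mathcal{C}_{1,\lambda,\mu}$- and $^{\phi}\mathcal{C}_{2,\lambda,\mu}$-loci, and hence the fakeness, a genuine feature of the stratum rather than an artefact of coordinates.
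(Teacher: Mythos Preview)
Your proposal is correct and in fact supplies more than the paper does: Corollary~\ref{3123} is stated in the paper without a separate proof, the conjugating $\phi$ being written into the statement and the ``descendant of $\mathcal{F}_6$ via the normalizer'' dichotomy simply asserted on the back of the explicit families $\mathcal{C}_{1,\lambda,\mu}$, $\mathcal{C}_{2,\lambda,\mu}$ from Theorem~\ref{thm3}. Your Step~1 reproduces exactly this: the diagonal computation $\phi^{-1}\varrho_2(\operatorname{A}_4)\phi=\varrho_1(\operatorname{A}_4)$ and the transformation of $X^6+Y^6+Z^6$ into $X^6+\zeta_3^{-1}Y^6+\zeta_3Z^6$ are the paper's content verbatim.

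Where you genuinely add to the paper is Steps~2--3. The scaling character $\chi_C\colon\varrho_1(\operatorname{A}_4)\to\mu_3$ is not introduced in the paper, but it is the invariant that underlies the paper's ``descendant of $\mathcal{F}_6$'' language: $\chi_C$ trivial is exactly the condition that $F_C$ be a genuine $\varrho_1(\operatorname{A}_4)$-invariant, equivalently that the core $X^6+Y^6+Z^6$ can be retained. Your observation that an isomorphism between $\varrho_1(\operatorname{A}_4)$-models lies in $N_{\PGL_3(K)}(\varrho_1(\operatorname{A}_4))$ and that conjugation by $N$ acts on characters through $\operatorname{Out}(\operatorname{A}_4)\cong\Z/2\Z$, which can only swap the two nontrivial characters, gives a clean proof of what the paper leaves as a bare claim, namely that no $N$-translate of $^{\phi}\mathcal{C}_{2,\lambda,\mu}$ is a Fermat descendant. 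Your Step~3 then turns ``fake'' into an actual argument rather than an inspection.

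One small caveat: Step~3 assumes that a single normal form for $\widetilde{\mathcal{M}_{10}^{\operatorname{Pl}}}(\varrho_1(\operatorname{A}_4))$ carries $\varrho_1(\operatorname{A}_4)$ in its automorphism group for generic parameters and has an irreducible parameter space. This matches how the paper uses ``normal form'' throughout (parameters ranging over an open set of affine space, with the displayed group acting in the given coordinates), so the assumption is harmless here, but it is an assumption about a notion the paper leaves informal.
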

\section{Preliminaries about automorphism groups}
Based entirely on geometrical methods, H. Mitchell \cite[\S 1-10]{Mit} proved that if $G$ is a finite subgroups of $\operatorname{PGL}_3(K)$, then it fixes a point, a line or a triangle unless it is primitive and conjugate to some group in a specific list. However, as a consequence of Maschke's theorem in group representation theory, the first two cases are equivalent, in the sense that if $G$ fixes a point (respectively a line), then it also
fixes a line not passing through the point (respectively a point not lying the line).

\vspace{0.2cm}
\noindent\textbf{Notation.} For a non-zero monomial $cX^{i_1}Y^{i_2}Z^{i_3}$ with $c\in K^*$, its exponent is defined to be $\operatorname{max}\{i_1,i_2,i_3\}$. For a homogenous polynomial $F(X,Y,Z)$, the core of it is
defined to be the sum of all terms of $F$ with the greatest
exponent. Now, let $C_0$ be a non-singular plane curve over $K$, a pair $(C,G)$ with
$G\leq \operatorname{Aut}(C)$ is said to be a descendant of $C_0$ if $C$ is defined
by a homogenous polynomial whose core is a defining polynomial of
$C_0$ and $G$ acts on $C_0$ under a suitable change of the
coordinates system, i.e. $G$ is $\operatorname{PGL}_3(K)$-conjugate to a subgroup of
$\operatorname{Aut}(C_0)$.

An element of $\operatorname{PGL}_3(K)$ is called \emph{intransitive} if it has the matrix shape
$$\left(
                                              \begin{array}{ccc}
                                                 1 & 0 & 0\\
                                                0 & \ast & \ast\\
                                                0 & \ast & \ast \\
                                              \end{array}
                                            \right).$$
The subgroup of $\operatorname{PGL}_3(K)$ of all intransitive elements is denoted by $\operatorname{PBD}(2,1)$. Obviously, there is a natural map $\Lambda:\operatorname{PBD}(2,1)\rightarrow \operatorname{PGL}_2(K)$ given by $$\left(
                                              \begin{array}{ccc}
                                                 1 & 0 & 0\\
                                                0 & \ast & \ast\\
                                                0 & \ast & \ast \\
                                              \end{array}
                                            \right)\in\operatorname{PBD}(2,1)\mapsto\left(
                                                                                                     \begin{array}{cc}
                                                                                                       \ast & \ast \\
                                                                                                       \ast& \ast \\
                                                                                                     \end{array}
                                                                                                   \right)\in \operatorname{PGL}_2(K).$$

Theorem \ref{teoHarui} below is very helpful for determining the full automorphism groups of smooth plane curves. For more details, we refer to the work of T. Harui \cite[Theorem 2.1]{Harui}.

\begin{thm}\label{teoHarui} Let $C$ be a non-singular plane curve of degree $d\geq4$ defined over an algebraically closed field $K$ of characteristic $0$. Then, one of the following situations holds:
\begin{enumerate}[1.]
  \item $\operatorname{Aut}(C)$ fixes a point on $C$ and then it is cyclic.
  \item $\operatorname{Aut}(C)$ fixes a point not lying on $C$ where we can think about $\Aut(C)$ in the following commutative diagram, with exact rows and vertical injective
morphisms:
  $$
\xymatrix
{
1\ar[r]  & K^*\ar[r]                    & \operatorname{PBD}(2,1)\ar[r]^{\Lambda}& \operatorname{PGL}_2(K)\ar[r]& 1         \\
         &                              &                                        &                              &\\
1\ar[r]  & N\ar[r]\ar@{^{(}->}[uu] & \operatorname{Aut}(C)\ar[r]\ar@{^{(}->}[uu] & G'\ar[r]\ar@{^{(}->}[uu]& 1
}
$$
Here, $N$ is a cyclic group of order dividing the degree $d$ and $G'$ is a
subgroup of $\operatorname{PGL}_2(K)$, which is conjugate to a cyclic group $\Z/m\Z$ of
order $m$ with $m\leq d-1$, a Dihedral group $\operatorname{D}_{2m}$ of order $2m$
with $|N|=1$ or $m|(d-2)$, one of the alternating groups $\operatorname{A}_4$, $\operatorname{A}_5$, or the symmetry group $\operatorname{S}_4$.
\begin{rem}\label{extensionrem}
We note that $N$ is viewed as the part of $\Aut(C)$ acting on the variable $B\in\{X,Y,Z\}$ and fixing the other two variables, while $G'$ is the part acting on $\{X,Y,Z\}-\{B\}$ and fixing $B$. For example, if $B=X$, then every automorphism in $N$ has the shape $\operatorname{diag}(\zeta_n,1,1)$ for some $n$th root of unity $\zeta_n$.
\end{rem}
\item $\operatorname{Aut}(C)$ is conjugate to a subgroup $G$ of $\operatorname{Aut}(\mathcal{F}_d)$, where $\mathcal{F}_d$ is the Fermat curve $X^d+Y^d+Z^d=0$. In particular, $|G|$ divides $|\operatorname{Aut}(\mathcal{F}_d)|=6d^2$, and $(C,G)$ is a descendant of $\mathcal{F}_d$.
\item $\operatorname{Aut}(C)$ is conjugate to a subgroup $G$ of $\operatorname{Aut}(\mathcal{K}_d)$, where $\mathcal{K}_d$ is the Klein curve curve
$X^{d-1}Y+Y^{d-1}Z+XZ^{d-1}=0$. In this case, $|\operatorname{Aut}(C)|$ divides $|\operatorname{Aut}(\mathcal{K}_d)|=3(d^2-3d+3)$, and
$(C,G)$ is a descendant of $\mathcal{K}_d$.
\item $\operatorname{Aut}(C)$ is conjugate to one of the finite primitive subgroup of $\operatorname{PGL}_3(K)$ namely, the Klein group
$\operatorname{PSL}(2,7)$, the icosahedral group $\operatorname{A}_5$, the alternating group
$\operatorname{A}_6$, or to one of the Hessian groups $\operatorname{Hess}_{*}$ with $*\in\{36,\,72,\,216\}$.
\end{enumerate}
\end{thm}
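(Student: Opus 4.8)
<br>The plan is to use Mitchell's classification, quoted at the beginning of this section, as the backbone and to analyze each of its geometric cases separately, translating ``fixes a point/line/triangle'' and ``primitive'' into the five alternatives of the statement. By Mitchell, the finite group $G=\operatorname{Aut}(C)\leq\operatorname{PGL}_3(K)$ either fixes a point, fixes a line, fixes a triangle, or is primitive and conjugate to one of the exceptional groups; this last possibility is exactly alternative (5), with the primitive list supplying $\operatorname{PSL}(2,7)$, $\operatorname{A}_5$, $\operatorname{A}_6$ and the three Hessian groups. Using Maschke's theorem (as noted after Mitchell's statement), fixing a point is equivalent to fixing a line, so the first two cases merge into a single \emph{intransitive} case, and it remains to treat the intransitive case and the triangle case.

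For the intransitive case I conjugate so that $G$ fixes the point $P=(1:0:0)$ and the line $X=0$, whence $G\leq\operatorname{PBD}(2,1)$, and I split according to whether $P\in C$. If $P\in C$, the tangent line $T_PC$ is $G$-invariant, so $G$ fixes the flag $(P,T_PC)$ and acts on the one-dimensional tangent space $T_PC$; since $C$ has genus $(d-1)(d-2)/2\geq3$ and $K$ has characteristic $0$, an automorphism fixing $P$ and acting trivially on $T_PC$ is the identity, so $G\hookrightarrow K^*$ is cyclic — alternative (1). If $P\notin C$, then $\Lambda$ restricts to $1\to N\to G\to G'\to1$ with $N=G\cap\ker\Lambda$ and $G'=\Lambda(G)\leq\operatorname{PGL}_2(K)$, which is the diagram of alternative (2). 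Writing $F=\sum_k X^kF_k(Y,Z)$, an element $\operatorname{diag}(\lambda,1,1)\in N$ forces $\lambda^k=c$ for all $k$ with $F_k\neq0$; irreducibility of $C$ gives $F_0\neq0$ and $P\notin C$ gives $F_d\neq0$, so $c=1$ and $\lambda^d=1$, proving $N$ cyclic of order dividing $d$. The classical classification of finite subgroups of $\operatorname{PGL}_2(K)$ then identifies $G'$ as cyclic, dihedral, $\operatorname{A}_4$, $\operatorname{A}_5$ or $\operatorname{S}_4$, and a closer analysis of how the semi-invariant $F$ restricts along the fixed locus of a generator of $G'$ on the line $X=0$ yields the numerical constraints $m\leq d-1$ and, in the dihedral case, $|N|=1$ or $m\mid(d-2)$.

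For the triangle case I normalize the invariant triangle to the coordinate triangle, with vertices $(1:0:0),(0:1:0),(0:0:1)$, and consider the permutation representation $G\to\operatorname{S}_3$ on its vertices. If the image fixes a vertex (i.e.\ is trivial or a transposition, both of which fix a point), then $G$ already fixes a point and we reduce to the intransitive case; so the genuinely new situation is when the image contains a $3$-cycle and hence acts transitively on the vertices. Here I run the \emph{core} argument: inspecting $C$ at each vertex, smoothness forces, for every vertex, either the pure power (e.g.\ $X^d$) to occur in $F$, which happens exactly when the vertex lies off $C$, or a degree-$(d-1)$ term (e.g.\ $X^{d-1}Y$ or $X^{d-1}Z$) to occur, which happens exactly when the vertex lies on $C$ and $C$ is smooth there. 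Transitivity of $G$ makes these alternatives simultaneous for all three vertices, so the core has greatest exponent either $d$, giving a core of shape $aX^d+bY^d+cZ^d$ projectively equivalent to $\mathcal{F}_d$, or $d-1$, giving a core of shape $X^{d-1}Y+Y^{d-1}Z+Z^{d-1}X$ projectively equivalent to $\mathcal{K}_d$. In either subcase $G$ permutes the coordinate lines compatibly with the core, so $G$ is $\operatorname{PGL}_3(K)$-conjugate to a subgroup of $\operatorname{Aut}(\mathcal{F}_d)$, respectively $\operatorname{Aut}(\mathcal{K}_d)$, and $(C,G)$ is a descendant of $\mathcal{F}_d$, respectively $\mathcal{K}_d$ — alternatives (3) and (4), with the order bounds $6d^2$ and $3(d^2-3d+3)$ coming from $|\operatorname{Aut}(\mathcal{F}_d)|$ and $|\operatorname{Aut}(\mathcal{K}_d)|$.

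The main obstacle I anticipate is not the overall case division, which is dictated by Mitchell, but the quantitative refinements. In alternative (2), rigorously extracting the bound $m\leq d-1$ and the dihedral divisibility condition requires a careful count of how the $G'$-action on $X=0$ interacts with the intersection $C\cap\{X=0\}$ and with the lift to $\operatorname{Aut}(C)$. In the triangle case, upgrading ``the core has the right monomial shape'' to ``the core is projectively the Fermat or Klein curve and $G$ embeds in its automorphism group'' requires checking that the transitive $\operatorname{S}_3$- or $\mathbb{Z}/3\mathbb{Z}$-symmetry together with semi-invariance of $F$ pins the core coefficients to the standard models, and that no smooth degree-$d$ curve fixing the triangle escapes these two cases.
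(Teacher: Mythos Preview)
The paper does not prove this theorem at all: it is quoted verbatim from Harui \cite[Theorem 2.1]{Harui} as a preliminary tool, with the sentence ``For more details, we refer to the work of T.\ Harui'' standing in for any argument. So there is no proof in the paper to compare your proposal against.

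That said, your outline is the natural strategy and is essentially how Harui's original proof proceeds: start from Mitchell's trichotomy (point/line, triangle, primitive), collapse the first two via Maschke, split the intransitive case according to whether the fixed point lies on $C$, and in the triangle case use transitivity on the vertices together with the smoothness constraint at each vertex to force the core to be Fermat- or Klein-type. Your own list of ``obstacles'' is accurate: the sharp bounds $m\leq d-1$ and $m\mid(d-2)$ in case (2) require a genuine argument about the intersection $C\cap\{X=0\}$ and how dihedral lifts interact with $N$, and in the triangle case one must rule out mixed cores such as $X^{d-1}Y+Y^{d-1}X+\cdots$ or cores supported only on two vertices, which is where the hypothesis that the $3$-cycle lies in $G$ (not merely that $G$ permutes the vertices) does real work. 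These are exactly the places where Harui's paper supplies the missing computations, so if you want a self-contained proof you should consult \cite{Harui} directly rather than the present paper.
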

Finally, we have:
\begin{prop}\label{FermatandKlein}
The automorphism groups of the Fermat sextic curve $\mathcal{F}_6$
generated by $[X:Z:Y],\,[Y:Z:X],\,\operatorname{diag}(\zeta_6,1,1)$ and $\operatorname{diag}(1,\zeta_6,1)$ of orders $2,3,6$ and $6$ respectively is isomorphic to $\operatorname{GAP}(216,92)=(\Z/6\Z)^2\rtimes\operatorname{S}_3$. On the other hand, the automorphism group of the Klein sextic curve
$\mathcal{K}_6$ generated by $\operatorname{diag}(1,\zeta_{21},\zeta_{21}^{-4})$ and $[Y:Z:X]$ of orders $21$ and $3$ respectively is isomorphic to $\operatorname{GAP}(63,3)=\Z/21\Z\rtimes\Z/3\Z$.
\end{prop}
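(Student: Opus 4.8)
\textbf{Proof proposal for Proposition \ref{FermatandKlein}.}
The plan is to verify each generating automorphism preserves the corresponding curve, identify the abstract group generated, and then invoke Theorem \ref{teoHarui}(3)--(4) to see that nothing larger can occur. For $\mathcal{F}_6\colon X^6+Y^6+Z^6=0$, one checks directly that each of the three diagonal transformations $\operatorname{diag}(\zeta_6,1,1),\operatorname{diag}(1,\zeta_6,1)$ (and their product $\operatorname{diag}(1,1,\zeta_6)$, which is dependent on them in $\PGL_3(K)$) multiplies one of $X^6,Y^6,Z^6$ by $\zeta_6^6=1$, so the defining form is fixed; likewise the coordinate permutations $[X:Z:Y]$ and $[Y:Z:X]$ only permute the three terms. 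So the listed elements lie in $\Aut(\mathcal{F}_6)$. The diagonal elements of $\PGL_3(K)$ fixing $\mathcal{F}_6$ form $(\Z/6\Z)^2$ (scaling each coordinate by a sixth root of unity, modulo the common scalar), and conjugation by the permutations realizes the $\operatorname{S}_3$-action permuting coordinates; since $\mathcal{F}_6$ is smooth of degree $6\ge 4$, Theorem \ref{teoHarui} forces $\Aut(\mathcal{F}_6)$ into case (3) or (5), and a dimension/fixed-locus count (the Hessian and primitive groups do not stabilize the Fermat pencil, or alternatively an appeal to the classical fact that $\Aut(\mathcal{F}_d)=(\Z/d\Z)^2\rtimes\operatorname{S}_3$ for $d\ge4$) shows $\Aut(\mathcal{F}_6)=(\Z/6\Z)^2\rtimes\operatorname{S}_3$. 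Matching orders, $|(\Z/6\Z)^2\rtimes\operatorname{S}_3|=36\cdot 6=216$, and checking the semidirect product structure against the GAP small-groups library identifies it as $\operatorname{GAP}(216,92)$.

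For $\mathcal{K}_6\colon X^{5}Y+Y^{5}Z+XZ^{5}=0$, I would first confirm $\sigma_1:=\operatorname{diag}(1,\zeta_{21},\zeta_{21}^{-4})$ is an automorphism: applied to the three monomials it multiplies $X^5Y$ by $\zeta_{21}$, $Y^5Z$ by $\zeta_{21}^{5-4}=\zeta_{21}$, and $XZ^5$ by $\zeta_{21}^{-20}=\zeta_{21}$ (using $-20\equiv1\bmod 21$), so the whole form is scaled by $\zeta_{21}$ and the curve is preserved; its order is $\operatorname{lcm}$ of the orders of the entries, which is $21$. Next, $\tau:=[Y:Z:X]$ cyclically permutes $(X,Y,Z)\mapsto(Y,Z,X)$ and hence cyclically permutes the three monomials $X^5Y\to Y^5Z\to Z^5X\to X^5Y$, so it lies in $\Aut(\mathcal{K}_6)$ and has order $3$. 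One computes $\tau\sigma_1\tau^{-1}$ as a diagonal matrix with entries a permutation of $(\zeta_{21}^{-4},1,\zeta_{21})$, i.e. a power of $\sigma_1$ modulo scalars, so $\langle\sigma_1\rangle$ is normal and $\langle\sigma_1,\tau\rangle\cong\Z/21\Z\rtimes\Z/3\Z$ of order $63$, with the action of $\tau$ on $\Z/21\Z$ given by multiplication by the residue $r$ determined by the above conjugation. Finally, $d=6$ gives $|\Aut(\mathcal{K}_6)|\mid 3(d^2-3d+3)=3\cdot 21=63$ by Theorem \ref{teoHarui}(4), so this subgroup is all of $\Aut(\mathcal{K}_6)$; comparing the semidirect product data with the library identifies it as $\operatorname{GAP}(63,3)$.

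The routine parts are the modular-arithmetic checks that the given matrices fix the curves and the computation of the conjugation relations; the only genuinely delicate point is the \emph{upper} bound, namely ruling out that $\Aut(\mathcal{F}_6)$ is strictly larger than the group generated by the listed elements. Here I would lean on Theorem \ref{teoHarui}: for $\mathcal{K}_6$ the bound $|\Aut|\mid 63$ is immediate and matches $|\langle\sigma_1,\tau\rangle|$, so there is nothing more to do; for $\mathcal{F}_6$ the bound $|\Aut|\mid 6d^2=216$ again matches exactly, so once the subgroup of order $216$ is exhibited it must be everything. Thus the main obstacle reduces to the bookkeeping of exhibiting enough automorphisms to reach the orders $216$ and $63$, after which the Harui bounds close the argument; the abstract isomorphism types then follow by identifying $(\Z/6\Z)^2\rtimes\operatorname{S}_3$ and $\Z/21\Z\rtimes\Z/3\Z$ in the GAP catalogue as $\operatorname{GAP}(216,92)$ and $\operatorname{GAP}(63,3)$ respectively.
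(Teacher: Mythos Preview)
Your proposal is correct and follows essentially the same path as the paper: both outsource the ``upper bound'' (that the listed elements really generate the full automorphism group) to Harui's work---the paper by citing \cite[Propositions 3.3, 3.5]{Harui} directly, you by invoking the order formulas $|\Aut(\mathcal{F}_d)|=6d^2$ and $|\Aut(\mathcal{K}_d)|=3(d^2-3d+3)$ already recorded in Theorem~\ref{teoHarui}---and then identify the abstract group. The only real difference is in that last identification step: the paper writes down an explicit presentation (the relations $(ab)^2=(ac)(ca)^{-1}=(cd)(dc)^{-1}=ada(cd)^{-5}=bcb^{-1}(cd)^{-5}=1$ for $\mathcal{F}_6$, and $ba=(ab)^{-5}$ for $\mathcal{K}_6$) and matches it against the GroupNames database to single out $\operatorname{GAP}(216,92)$ among the several semidirect products $(\Z/6\Z)^2\rtimes\operatorname{S}_3$, whereas you specify the $\operatorname{S}_3$-action geometrically (coordinate permutation on the diagonal torus) and defer to a library lookup. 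Both are valid; the paper's explicit relations are slightly more self-contained for a reader without GAP at hand, while your description makes the semidirect structure more transparent.
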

\begin{proof}
Regarding the generators of $\Aut(\mathcal{F}_6)$ and $\Aut(\mathcal{K}_6)$, we refer the reader to \cite[Propositions 3.3, 3.5]{Harui}.
Now, for the Fermat curve $\mathcal{F}_6$, take $a=[X:Z:Y],\,b=[Y:Z:X],\,c=\operatorname{diag}(\zeta_6,1,1)$ and $d=\operatorname{diag}(1,\zeta_6,1)$. One verifies that
$$(ab)^2=(ac)(ca)^{-1}=(cd)(dc)^{-1}=ada(cd)^{-5}=bcb^{-1}(cd)^{-5}=1.$$
These relations give us the $4$th semidirect product of $(\Z/6\Z)^2$ and $\operatorname{S}_3$ acting faithfully, see \href{https://people.maths.bris.ac.uk/~matyd/GroupNames/193/e15/S3byC6%5E2.html#s4}{semidirect products of $(\Z/6\Z)^2$ and $\operatorname{S}_3$}
for more details.

For the Klein curve $\mathcal{K}_6$, the two generators $a=\operatorname{diag}(1,\zeta_{21},\zeta_{21}^{-4})$ and $b=[Y:Z:X]$ of orders $21$ and $3$ respectively produce $\operatorname{GAP}(63,3)=\Z/21\Z\rtimes\Z/3\Z$ as $ba=(ab)^{-5}$.
\end{proof}
\section{Proof of Theorem \ref{thm2}}\label{case301}

In this case, $C:F(X,Y,Z)=0$ has an homology $\sigma$ of period $3$  in its automorphism group. The results in \cite{MR3475065} allows us to assume that $\sigma$ acts as $$(X:Y:Z)\mapsto(X:Y:\zeta_3Z)$$ up to $K$-isomorphism, where $\zeta_3$ is a fixed primitive $3$rd root of unity in $K$. In particular, $C$ is defined over $K$ by a non-singular plane equation of the form:
$$C:Z^6+Z^3L_{3,Z}+L_{6,Z}=0,$$
where $\sigma=\operatorname{diag}(1,1,\zeta_3)$ is an automorphism of maximal order $3$. By non-singularity, $L_{6,Z}$ should be of degree at least $5$ in both variables $X$ and $Y$. Also, $L_{3,Z}\neq0$ or $\operatorname{diag}(1,1,\zeta_6)$ would be an automorphism of order $6>3$.
%

In the sense of Theorem \ref{teoHarui}, we have the following:
\begin{itemize}
\item First, $\Aut(C)$ is not conjugate to any of the finite primitive subgroups of $\PGL_3(K)$ since each of them contains elements of order $>3$. Also, $C$ is not a descendant of the Klein sextic curve $\mathcal{K}_6$ because $\Aut(\mathcal{K}_6)$ by Proposition \ref{FermatandKlein} equals $\Z/21\Z\rtimes\Z/3\Z$ and it does not contains homologies of order $3$ similar to $\sigma$.
 \item Secondly, suppose that $C$ is a descendant of the Fermat curve $\mathcal{F}_6$. So there is a $\phi\in\PGL_3(K)$ such that $\phi^{-1}\Aut(C)\phi\leq\Aut(\mathcal{F}_6)$ and the transformed equation $^{\phi}C$ is $X^6+Y^6+Z^6+$ lower order terms in $X,Y,Z\,=0$. There is no loss of generality to impose $\phi^{-1}\langle\sigma\rangle\phi=\langle\sigma\rangle$ since homologies of period $3$ inside $\Aut(\mathcal{F}_6)$ form two conjugacy classes represented by $\sigma$ and $\sigma^{-1}$. Hence $^{\phi}C$ reduces to
     $$
     ^{\phi}C:X^6+Y^6+Z^6+Z^3L_{3,Z}+\,\text{lower order terms in}\,X, Y=0
     $$
Furthermore, by assumption, the automorphisms of $C$ have orders $\leq3$, then the group structure of $\Aut(\mathcal{F}_6)=(\Z/6\Z)^2\rtimes\operatorname{S}_3$ assures that $\Aut(^{\phi}C)$ would be one of the following groups inside $\Aut(\mathcal{F}_6)$: $$\Z/3\Z,\,(\Z/3\Z)^2,\,\operatorname{S}_3,\,\operatorname{A}_4,\,\Z/3\Z\rtimes\operatorname{S}_3,\,\operatorname{He}_3.$$ For more details, check the \href{https://people.maths.bris.ac.uk/~matyd/GroupNames/193/C6%5E2sS3.html}{subgroups lattice of $\Aut(\mathcal{F}_6)$}.

Now we tackle each of the above situations.

- Any copy of $\operatorname{S}_3$ (respectively $\operatorname{A}_4$) inside $\Aut(\mathcal{F}_6)$ is $\Aut(\mathcal{F}_6)$-conjugate to either $\varrho_i(\operatorname{S}_{3})$ (respectively $\varrho_i(\operatorname{A}_{4})$) with $i=1$ or $2$. But none of these subgroups has homologies of period $3$ similar to $\sigma$. So $\Aut(^{\phi}C)$ can not be an $\operatorname{S}_3$ or $\operatorname{A}_4$ inside $\Aut(\mathcal{F}_6)$.

- If $\Aut(^{\phi}C)$ equals a $(\Z/3\Z)^2,\,\Z/3\Z\rtimes\operatorname{S}_3$ or $\operatorname{He}_3$ in $\Aut(\mathcal{F}_6)$, then there must be $\sigma'\in\Aut(\mathcal{F}_6)\cap\Aut(^{\phi}C)$ of order $3$ that commutes with $\sigma$ as in any of these groups $\Z/3\Z$ is always contained in a $\left(\Z/3\Z\right)^2$. By Proposition \ref{FermatandKlein}, the elements of order $3$ in $\Aut(\mathcal{F}_6)$ are $\operatorname{diag}(1,s,t)$ with $s^3=t^3=1,$\,$[sY:tZ:X]$ and $[tZ:X:sY]$ with $s^6=t^6=1$. One easily verifies that only the diagonal shapes satisfies the description, equivalently, $\sigma'\in\langle\sigma,\operatorname{diag}(1,\zeta_3,1)\rangle$. In any case, we can reduce $C$ up to $K$-isomorphism  to
$$
     ^{\phi}C:X^6+Y^6+Z^6+Z^3\left(\alpha_{3,0}X^3+\alpha_{0,3}Y^3\right)+\,\alpha_{3,3}X^3Y^3=0,
     $$
where $\varrho_1\left((\Z/3\Z)^2\right)\leq\Aut(^{\phi}C)$.
\begin{rem}\label{important301}
In this scenario, the parameters $\alpha_{3,0},\alpha_{0,3},\alpha_{3,3}$ must be pair-wise distinct modulo $\{\pm1\}$ or $^{\phi}C$ will admit automorphisms of order $>3$. For example, $[\zeta_3Y:X:Z]\in\Aut(^{\phi}C)$ has order $6$ if $\alpha_{3,0}=\alpha_{0,3}$ and $[\zeta_3Y:X:-Z]\in\Aut(^{\phi}C)$ has order $6$ if $\alpha_{3,0}=-\alpha_{0,3}$.
\end{rem}
A similar discussion shows that any $\sigma''\in\Aut(\mathcal{F}_6)$ that commutes with $\sigma$ or $\sigma'$ belongs to $\langle\sigma,\sigma'\rangle$. Therefore, $\Aut(^{\phi}C)$ can not be the Heisenberg group $\operatorname{He}_3$ because this requires another automorphism $\sigma''\notin\langle\sigma,\sigma'\rangle$ that commutes with either $\sigma$ or $\sigma'$.

Finally, for $\Aut(^{\phi}C)$ to be $\Z/3\Z\rtimes\operatorname{S}_3$, it is necessary that $\Aut(\mathcal{F}_6)\cap\Aut(^{\phi}C)$ has involutions in it. Proposition \ref{FermatandKlein} tells us that the involutions of $\mathcal{F}_6$ are $\operatorname{diag}(-1,1,1),\,$ $\operatorname{diag}(1,-1,1),\,$ $\operatorname{diag}(1,1,-1),\,$ $[X:sZ:s^{-1}Y],\,$ $[s^{-1}Y:sX:Z]$ and $[sZ:Y:s^{-1}X]$ with $s^6=1$. If any of these involutions lies in $\Aut(^{\phi}C)$, then two of the parameters are equal modulo $\{\pm1\}$, which is absurd by Remark \ref{important301}. For example, $\operatorname{diag}(-1,1,1)\in\Aut(^{\phi}C)$ only if $\alpha_{3,0}=\alpha_{3,3}=0$,\,$[sY:s^{-1}X:Z]\in\Aut(^{\phi}C)$ only if $\alpha_{3,0}=\pm\alpha_{0,3}$, and so on.
\item Third, if $\Aut(C)$ fixes a line $\mathcal{L}$ and a point $P$ not lying on $\mathcal{L}$, then by Theorem \ref{teoHarui} we can think about $\Aut(C)$ in a short exact sequence $$1\rightarrow N=\langle\sigma\rangle\rightarrow \Aut(C)\rightarrow\Lambda(\Aut(C))\rightarrow 1,$$
where $\Lambda(\Aut(C))\simeq\Z/3\Z,\,\operatorname{D}_{4}$ or $\operatorname{A}_4$.

- Any group of order $36$ (respectively $12$) that has a normal subgroup isomorphic to $\Z/3\Z$ contains elements of order $6>3$, see \href{https://people.maths.bris.ac.uk/~matyd/GroupNames/1/Dic3.html}{Groups of order $12$} and \href{https://people.maths.bris.ac.uk/~matyd/GroupNames/1/C36.html}{Groups of order $36$} for more details. This allows us to exclude that $\Lambda(\Aut(C))$ equals $\operatorname{A}_4$ or $\operatorname{D}_4$.

- On the other hand, if $\Lambda(\Aut)(C)$ equals $\Z/3\Z$ in $\PGL_2(K)$, then $\Aut(C)$ equals $(\Z/3\Z)^2$ in $\operatorname{PBD}(2,1)$. In particular, $C:Z^6+Z^3L_{3,Z}+L_{6,Z}=0$ admits an automorphism $\sigma'\in\operatorname{PBD}(2,1)-\langle\sigma\rangle$ of order $3$ that commutes with $\sigma$. Depending on whether $\sigma'$ is an homology or a non-homology, it is conjugate via a change of variables $\phi\in\operatorname{PBD}(2,1)$, the normalizer of $\langle\sigma\rangle$, to $\operatorname{diag}(1,\zeta_3,1)$ or $\operatorname{diag}(1,\zeta_3,\zeta_3^{-1})$ respectively. In either way, $\Aut(^{\phi}C)=\varrho_1\left((\Z/3\Z)^2\right)$ which appeared earlier.
\end{itemize}

Summing up, we deduce that $\Aut(C)$ is always cyclic of order $3$ generated by $\sigma$ except when $C$ is projectively equivalent to $C'$ of the form
$$
C':X^6+Y^6+Z^6+Z^3\left(\alpha_{3,0}X^3+\alpha_{0,3}Y^3\right)+\,\alpha_{3,3}X^3Y^3=0,
$$
such that $\alpha_{3,0},\alpha_{0,3},\alpha_{3,3}$ are pair-wise distinct modulo $\{\pm1\}$. In this case, $\Aut(C)$ is conjugate to $(\Z/3\Z)^2$ generated by $\operatorname{diag}(1,\zeta_3,1)$ and $\operatorname{diag}(1,\zeta_3,1)$.

This proves Theorem \ref{thm2}.

\section{Proof of Theorem \ref{thm1}}\label{case201}
In this case, $C:F(X,Y,Z)=0$ has an homology $\sigma$ of period $2$  in its automorphism group. By \cite{MR3475065}, there is no loss of generality to assume that $\sigma$ acts as $$(X:Y:Z)\mapsto(X:Y:-Z)$$ up to $K$-isomorphism. In particular, $C$ is defined over $K$ by a non-singular plane equation of the form:
$$C:Z^6+Z^4L_{2,Z}+Z^2L_{4,Z}+L_{6,Z}=0$$
where $\sigma=\operatorname{diag}(1,1,-1)$ is an automorphism of maximal order $2$. Again $L_{6,Z}$ is of degree $\geq5$ in $X$ and $Y$ by non-singularity. Also, $L_{2,Z}$ or $L_{4,Z}$ does not vanish or $\operatorname{diag}(1,1,\zeta_6)$ will be an automorphism of order $6>3$ otherwise.
\begin{itemize}
  \item Obviously, $\Aut(C)$ is not conjugate to any of the finite primitive subgroups of $\PGL_3(K)$ as each of them contains elements of order $>2$. Also, $C$ can not be a descendant of the Klein sextic curve $\mathcal{K}_6$ since $2\nmid|\Aut(\mathcal{K}_6)|$, recall that $|\Aut(\mathcal{K}_6)|=63$ by Proposition \ref{FermatandKlein}.
  \item Secondly, if $\Aut(C)$ fixes a line $\mathcal{L}$ and a point $P$ off $\mathcal{L}$, then, by Theorem \ref{teoHarui}, $\Aut(C)$ is inside $\operatorname{PBD}(2,1)$ and satisfies a short exact sequence $$1\rightarrow N=\langle\sigma\rangle\rightarrow \Aut(C)\rightarrow\Lambda(\Aut(C))\rightarrow 1.$$
Our assumptions that any automorphism of $C$ has order $\leq2$ implies that $\Lambda(\Aut(C))$ is either $\Z/2\Z$ or $\operatorname{D}_{4}$ inside $\PGL_2(K)$, so $\Aut(C)$ is conjugate to either $(\Z/2\Z)^2$ or $(\Z/2\Z)^3$. In both situations $\Aut(C)$ has another involution $\sigma'$ that commutes with $\sigma$. Up to projective equivalence via a change of variables $\phi\in\operatorname{PBD}(2,1)$, the normalizer of $\langle\sigma\rangle$ in $\PGL_3(K)$, we can assume that $\sigma'=\operatorname{diag}(1,-1,1)$. Consequently, $C$ is $K$-isomorphic to $C':Z^6+Z^4L_{2,Z}+Z^2L_{4,Z}+L_{6,Z}=0$ for some $L_{i,Z}\in K[X^2,Y^2]$. Moreover, $\Aut(C)$ equals $(\Z/2\Z)^3$ only if there is an involution $\sigma''\notin\operatorname{PBD}(2,1)-\langle\sigma,\sigma'\rangle$ that commutes with both $\sigma$ and $\sigma'$. It is straightforward to check that such $\sigma''$ does not exist, hence $\Aut(C)$ is not $(\Z/2\Z)^3$ in this case.

\item If $C$ is a descendant of the Fermat curve $\mathcal{F}_6$ via a change of variables $\phi\in\PGL_3(K)$ with bigger automorphism group than $\langle\sigma\rangle$, then $\Aut(^{\phi}C)$ is a copy of $(\Z/2\Z)^2$ inside $\Aut(\mathcal{F}_6)$. Indeed any other subgroup of $\Aut(\mathcal{F}_6)$ has elements of order $>2$, see \href{https://people.maths.bris.ac.uk/~matyd/GroupNames/193/C6%5E2sS3.html}{subgroups lattice of $\Aut(\mathcal{F}_6)$}.

Up to $\Aut(\mathcal{F}_6)$-conjugation, there are two copies of $(\Z/2\Z)^2$ inside $\Aut(\mathcal{F}_6)$ namely, $\langle\sigma,\sigma'\rangle$ and $\langle\sigma,\tau\rangle$ with $\sigma'=\operatorname{diag}(1,-1,1)$ and $\tau=[Y:X:Z]$. However, both groups are $\PGL_3(K)$-conjugated via a transformation in $\operatorname{PBD}(2,1)$, the normalizer of $\langle\sigma\rangle$ in $\PGL_3(K)$. Thus there is no loss of generality to assume that $\Aut(C)$ is conjugate to $\varrho_1\left((\Z/2\Z)^2\right)$, which was treated earlier.
\end{itemize}

Summing up, we deduce that $\Aut(C)$ is always cyclic of order $2$ generated by $\sigma$ except when $L_{i,Z}\in K[X^2,Y^2]$ for $i=2,4,6$. In the latter case, $\Aut(C)$ equals $\varrho_1\left((\Z/2\Z)^2\right)$, which shows Theorem \ref{thm1}.
\section{Proof of Theorem \ref{thm3}}\label{case312}
In this case, $C:F(X,Y,Z)=0$ has a non-homology $\sigma$ of period $3$ in its automorphism group. By \cite{MR3475065}, one can assume that $\sigma$ acts as $$(X:Y:Z)\mapsto(X:\zeta_3Y:\zeta_3^{-1}Z)$$ up to $K$-isomorphism, where $\zeta_3$ is a fixed primitive $3$rd root of unity in $K$. In particular, $C$ is a $K$-isomorphic to a non-singular plane model in one of the following families:
  \begin{eqnarray*}
  \mathcal{C}_1&:&X^6+Y^6+Z^6+XYZ\left(\alpha_{4,1}X^3+\alpha_{1,4}Y^3+\alpha_{1,1}Z^3\right)+\alpha_{2,2}X^2Y^2Z^2\\
  &+&\alpha_{3,3}X^3Y^3+\alpha_{3,0}X^3Z^3+\alpha_{0,3}Y^3Z^3=0\\
  \mathcal{C}_2&:&X^5Y+Y^5Z+XZ^5+XYZ\left(\alpha_{3,2}X^2Y+\alpha_{1,3}Y^2Z+\alpha_{2,1}XZ^2\right)\\
  &+&\alpha_{2,4}X^2Y^4+\alpha_{0,2}Y^2Z^4+\alpha_{4,0}X^4Z^2=0.
  \end{eqnarray*}
where $\sigma:=\operatorname{diag}(1,\zeta_3,\zeta_3^{-1})$ is an automorphism of maximal order $3$.

\begin{itemize}
\item Again $\Aut(\mathcal{C}_i)$ for $i=1$ and $2$ is not conjugate to any of the finite primitive subgroups of $\PGL_3(K)$.
\item Suppose that $\Aut(\mathcal{C}_i)$ fixes a line $\mathcal{L}$ and a point $P$ not lying on this line. Since $\sigma$ is a non-homology inside $\Aut(\mathcal{C}_i)$ in its canonical form, $\mathcal{L}$ must be one of the reference lines; $B=0$ with $B=X,Y$ or $Z$ and $P$ is the reference point $(1:0:0),\,(0:1:0)$ or $(0:0:1)$ respectively.

   - For $\mathcal{C}_2$, the point $P$ belongs to $C:F(X,Y,Z)=0$. Hence $\Aut(\mathcal{C}_2)$ is cyclic, generated by $\langle\sigma\rangle$.

   - For $\mathcal{C}_1$, we can further impose $\mathcal{L}:X=0$ and $P=(1:0:0)$ (in the worst case scenario, one just needs to permute two of the variables and to fix the third one, which preserves the property that $\sigma$ remains an automorphism). In particular, by Theorem \ref{teoHarui}, $\Aut(\mathcal{C}_1)\subseteq\operatorname{PBD}(2,1)$ and lives in a short exact sequence:
   $1\rightarrow N\rightarrow \Aut(\mathcal{C}_1)\rightarrow\Lambda(\Aut(\mathcal{C}_1))\rightarrow 1,$
   where $N=\langle\tau\rangle$ has order $1,2$ or $3$ and $\Lambda(\Aut(C))$ is either $\Z/3\Z$,\, $\operatorname{S}_{3}$ with $|N|=1$ or $\operatorname{A}_4$ in $\PGL_2(K)$.
First, we easily exclude the case when $\tau$ has order $2$ because $\sigma\tau$ would be an automorphism of order $6>3$, a contradiction. 

Secondly, we handle each of the remaining cases:
\begin{enumerate}
  \item If $\Lambda(\Aut(\mathcal{C}_1))=\Z/3\Z$ and $N=1$, then $\Aut(\mathcal{C}_1)=\Z/3\Z$ generated by $\sigma$.
  \item If $\Lambda(\Aut(\mathcal{C}_1))=\Z/3\Z$ and $N=\Z/3\Z$, then $\Aut(\mathcal{C}_1)=\varrho_1((\Z/3\Z)^2)$ generated by $\sigma$ and $\tau=\operatorname{diag}(\zeta_3,1,1)$. In particular, $\alpha_{4,1}=\alpha_{2,2}=\alpha_{1,1}=\alpha_{1,4}=0$, and $\mathcal{C}_1$ reduces to
      $$
X^6+Y^6+Z^6+Z^3\left(\alpha_{3,0}X^3+\alpha_{0,3}Y^3\right)+\,\alpha_{3,3}X^3Y^3=0,
$$
which happened before in Theorem \ref{thm2}. We also remark that $\alpha_{3,0}\neq\alpha_{0,3}$ or $[Y:X:Z]$ will be an extra involution for $\mathcal{C}_1$.

This clarifies part of Theorem \ref{thm3}, (1)-(i).
  \item If $\Lambda(\Aut(\mathcal{C}_1))=\operatorname{S}_{3}$ and $N=1$, then $C$ should have an involution $\tau$ such that $\tau\sigma\tau=\sigma^{-1}$. So $\tau=[X:sZ:s^{-1}Y],\,[sY:s^{-1}X:Z]$ or $[sZ:Y:s^{-1}X]$ with $s^6=1$. This holds if we are in one of the situations:
      $\alpha_{3,3}=\pm\alpha_{3,0}$ and $\alpha_{1,1}=\pm\alpha_{1,4}$,\,$\alpha_{0,3}=\pm\alpha_{3,0}$ and $\alpha_{4,1}=\pm\alpha_{1,4}$, or $\alpha_{3,3}=\pm\alpha_{0,3}$ and $\alpha_{1,1}=\pm\alpha_{4,1}$. Moreover, in all scenarios we can reduce to $\tau=[X:Z:Y]$ via a change of variables $\phi$ in the normalizer of $\langle\sigma\rangle$, more precisely, via $\phi=\operatorname{diag}(1,\lambda,s\lambda)$ modulo $\langle[X:Z:Y],[Y:Z:X]\rangle$ with $\lambda^6=1$. That is, $\mathcal{C}_1$ is $K$-isomorphic to
\begin{eqnarray*}
  \mathcal{C}'_1&:&X^6+Y^6+Z^6+\alpha'_{4,1}X^4YZ+\alpha'_{3,3}X^3(Y^3+Z^3)+\alpha'_{2,2}X^2Y^2Z^2\\
  &+& \alpha'_{1,2}XYZ(Y^3+Z^3)+\alpha'_{0,3}Y^3Z^3=0.
\end{eqnarray*}
Here $\Aut(\mathcal{C}'_1)=\langle\sigma,\,\tau\rangle=\varrho_1\left(\operatorname{S}_3\right)$. In particular, we should impose $\alpha'_{4,1}\neq\alpha'_{1,2}$ or $\alpha'_{3,3}\neq\alpha'_{0,3}$ to avoid having $[Y:Z:X]$ as an extra automorphism. Also, $\left(\alpha'_{3,3},\alpha'_{1,2}\right)\neq(0,0)$ to avoid having $\operatorname{diag}(1,\zeta_6,\zeta_6^{-1})$ as an extra automorphism of order $6>3$.

This shows part of Theorem \ref{thm3}, (1)-(ii).
\item If $\Lambda(\Aut(C))=\operatorname{A}_4$, then the \href{https://people.maths.bris.ac.uk/~matyd/GroupNames/index500.html}{Group Structure of $\operatorname{A}_4$} assures that $\Lambda(\Aut(C))$ contains $\Lambda(\tau)$ and $\Lambda(\tau')$
both of order $2$ such that
$$\Lambda(\sigma)\Lambda(\tau)\Lambda(\sigma)^{-1}=\Lambda(\tau'),\,\Lambda(\sigma)\Lambda(\tau')\Lambda(\sigma)^{-1}=\Lambda(\tau')\Lambda(\tau)=\Lambda(\tau)\Lambda(\tau').$$
We aim to show that such $\tau$ and $\tau'$ do not exist.
Write $\Lambda(\tau)=\left(
                \begin{array}{cc}
                  a & b \\
                  c & d \\
                \end{array}
              \right)$, then being of order $2$ yields $(a+d)b=(a+d)c=0$ and $a=\pm d$. So
$\Lambda(\tau)=\left(
                \begin{array}{cc}
                  0 & b \\
                  c & 0 \\
                \end{array}
              \right)\,\,\,\text{or}\,\,\,\left(
                                              \begin{array}{cc}
                                                a & b \\
                                                c & -a \\
                                              \end{array}
                                            \right).$

- If $\Lambda(\tau)=\left(
                \begin{array}{cc}
                  0 & b \\
                  c & 0 \\
                \end{array}
              \right)$, then $$\Lambda(\tau')=\Lambda(\sigma)\Lambda(\tau)\Lambda(\sigma)^{-1}=\left(
                                                                                                    \begin{array}{cc}
                                                                                                      0 & \zeta_3^{-1}b \\
                                                                                                      \zeta_3c & 0 \\
                                                                                                    \end{array}
                                                                                                  \right)=\Lambda(\tau)\,\,\text{in}\,\,\PGL_2(K),$$which implies that $
                                              \Lambda(\tau')\Lambda(\tau)\neq\Lambda(\tau)\Lambda(\tau')    $
                               a contradiction.

- If $\Lambda(\tau)=\left(
                                              \begin{array}{cc}
                                                a & b \\
                                                c & -a \\
                                              \end{array}
                                            \right),$ then $\Lambda(\tau')=\Lambda(\sigma)\Lambda(\tau)\Lambda(\sigma)^{-1}=\left(
                                              \begin{array}{cc}
                                                a & \zeta_3^{-1}b \\
                                                \zeta_3c & -a \\
                                              \end{array}
                                            \right)$ such that $\Lambda(\tau)\Lambda(\tau')=\Lambda(\tau')\Lambda(\tau)$. That is,
             $$
             \left(
                                              \begin{array}{cc}
                                                a^2+\zeta_3bc & (\zeta_3^{-1}-1)ab \\
                                               (1- \zeta_3)ac & a^2+\zeta_3^{-1}bc \\
                                              \end{array}
                                            \right)=\left(
                                              \begin{array}{cc}
                                                a^2+\zeta_3^{-1}bc & -(\zeta_3^{-1}-1)ab \\
                                               -(1- \zeta_3)ac & a^2+\zeta_3bc \\
                                              \end{array}
                                            \right)\,\,\text{in}\,\,\PGL_2(K).$$

For this to be true, either $ab=ac=0$ or $a^2+\zeta_3bc=-(a^2+\zeta_3^{-1}bc).$
Assuming $ab=ac=0$ yields $\Lambda(\tau')=\left(
                                                                                                    \begin{array}{cc}
                                                                                                      0 & \zeta_3^{-1}b \\
                                                                                                      \zeta_3^{-1}c & 0 \\
                                                                                                    \end{array}
                                                                                                  \right)=\Lambda(\tau)\,\,\text{in}\,\,\PGL_2(K)$ or $\Lambda(\tau')=\left(
                                                                                                    \begin{array}{cc}
                                                                                                      a & 0 \\
                                                                                                      0 & -a \\
                                                                                                    \end{array}
                                                                                                  \right)=\Lambda(\tau)\,\,\text{in}\,\,\PGL_2(K)$, which is again a contradiction.
Assuming $a^2+\zeta_3bc=-(a^2+\zeta_3^{-1}bc)$ yields $c=2a^2/b$ with $ab\neq0$. 
Moreover, $\Lambda(\sigma)\Lambda(\tau')\Lambda(\sigma)^{-1}=\Lambda(\tau)\Lambda(\tau'),$ hence
$$
\left(
  \begin{array}{cc}
    a & \zeta_3b \\
    2a^2/b & -a \\
  \end{array}
\right)=
\left(
                                              \begin{array}{cc}
                                                a(\zeta_3-\zeta_3^{-1}) & (\zeta_3^{-1}-1)b \\
                                               2a^2(1- \zeta_3)/b & -a(\zeta_3-\zeta_3^{-1}) \\
                                              \end{array}
                                            \right)
                                            \,\,\text{in}\,\,\PGL_2(K).$$
This is valid only if $(\zeta_3-\zeta_3^{-1})\zeta_3=(\zeta_3^{-1}-1)$ and $(\zeta_3-\zeta_3^{-1})=(1-\zeta_3)$, however, the second equation is never valid. This means that $\Lambda(\Aut(C))\neq\operatorname{A}_4$.
\end{enumerate}
\item Thirdly, assume that $\mathcal{C}_i$ is a descendant of the Klein sextic curve $\mathcal{K}_6$.

\begin{claim}\label{claim12,312}
For $\mathcal{C}_1$ a descendant of $\mathcal{K}_6$, $\Aut(\mathcal{C}_1)=\varrho_2(\Z/3\Z)$.
\end{claim}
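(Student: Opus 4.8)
The plan is to combine the two structural facts already available: the explicit automorphism group of the Klein sextic $\mathcal{K}_6$ from Proposition~\ref{FermatandKlein}, and the normalization of $\sigma=\operatorname{diag}(1,\zeta_3,\zeta_3^{-1})$ built into the family $\mathcal{C}_1$. Suppose $\mathcal{C}_1$ is a descendant of $\mathcal{K}_6$; then there is $\phi\in\PGL_3(K)$ with $\phi^{-1}\Aut(\mathcal{C}_1)\phi\leq\Aut(\mathcal{K}_6)=\Z/21\Z\rtimes\Z/3\Z=\operatorname{GAP}(63,3)$, and the core of $^{\phi}\mathcal{C}_1$ is (a defining polynomial of) $\mathcal{K}_6$. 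First I would note that, since by hypothesis every automorphism of $\mathcal{C}_1$ has order $\leq 3$ and $\varrho_2(\Z/3\Z)=\langle\sigma\rangle\leq\Aut(\mathcal{C}_1)$, the group $\phi^{-1}\Aut(\mathcal{C}_1)\phi$ is a subgroup of $\operatorname{GAP}(63,3)$ all of whose elements have order dividing $3$; inspecting the subgroup lattice of $\Z/21\Z\rtimes\Z/3\Z$, the only such subgroups are $\Z/3\Z$ and $(\Z/3\Z)^2$ (the unique Sylow $3$-subgroup being elementary abelian of order $9$, namely $\operatorname{He}_3$ is not a subgroup here, and any order-$21$ or order-$63$ subgroup contains elements of order $7$ or $21$).

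The next step is to rule out the $(\Z/3\Z)^2$ case and pin down $\langle\sigma\rangle$ inside $\Aut(\mathcal{K}_6)$. The order-$3$ elements of $\Aut(\mathcal{K}_6)$ split into two $\PGL_3(K)$-conjugacy types: those $\PGL_3(K)$-conjugate to $[Y:Z:X]$ (the "triangle-cycling" type, which is a non-homology) and, within the Sylow $3$-subgroup, the elements of the normal $\Z/21\Z$ of order $3$, which are conjugate to $\operatorname{diag}(1,\zeta_3,\zeta_3^{-1})$ up to powers. Since $\sigma$ is precisely a non-homology of period $3$ in canonical form, and the assumption $\phi^{-1}\langle\sigma\rangle\phi\leq\Aut(\mathcal{K}_6)$ forces $\phi^{-1}\sigma\phi$ into one of these two classes, I would argue via trace/eigenvalue data (a non-homology of order $3$ has eigenvalues $1,\zeta_3,\zeta_3^{-1}$, matching $[Y:Z:X]$ which also has those eigenvalues, so the distinction is whether $\sigma$ is diagonalizable with a repeated eigenvalue — it is not, it is a non-homology — hence $\phi^{-1}\sigma\phi$ is conjugate to $[Y:Z:X]$ inside $\Aut(\mathcal{K}_6)$). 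Thus without loss of generality $\phi^{-1}\sigma\phi=[Y:Z:X]$. If moreover $\phi^{-1}\Aut(\mathcal{C}_1)\phi=(\Z/3\Z)^2$, then the Sylow $3$-subgroup of $\Aut(\mathcal{K}_6)$, which is $\langle\operatorname{diag}(1,\zeta_3,\zeta_3^{-1}),[Y:Z:X]\rangle\cong(\Z/3\Z)^2$ — wait, this group has order $9$? One checks $[Y:Z:X]$ normalizes $\langle\operatorname{diag}(1,\zeta_3,\zeta_3^{-1})\rangle$; the issue is whether the extension is abelian. I would instead observe: the full Sylow $3$-subgroup $S$ of $\operatorname{GAP}(63,3)$ has order $9$ and is normal, and contains exactly one subgroup $\Z/3\Z$ coming from $\Z/21\Z$; so if $\phi^{-1}\Aut(\mathcal{C}_1)\phi=S$, then $\Aut(\mathcal{C}_1)$ would contain a homology of period $3$ conjugate to $\operatorname{diag}(1,\zeta_3,\zeta_3^{-1})$. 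But this contradicts the standing hypothesis of Theorem~\ref{thm3}, Case~$\mathcal{C}_1$: the only automorphism of period dividing $3$ we have assumed is the non-homology $\sigma$, and by the earlier bullet analysis (the "fixes a line and a point" case, parts (1)-(2)) the presence of a commuting homology of period $3$ has already been exhausted and leads to the $\mathcal{C}_1$-reduction of part (1)-(i), which is treated separately — so here, in the genuinely-$\mathcal{K}_6$-descendant branch, it cannot occur. Therefore $\phi^{-1}\Aut(\mathcal{C}_1)\phi=\langle[Y:Z:X]\rangle$, i.e. $\Aut(\mathcal{C}_1)=\langle\sigma\rangle=\varrho_2(\Z/3\Z)$.

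The main obstacle I anticipate is the bookkeeping around \emph{which} order-$3$ subgroups of $\operatorname{GAP}(63,3)$ are $\PGL_3(K)$-conjugate to $\langle\sigma\rangle$ versus to $\langle[Y:Z:X]\rangle$, and making the eigenvalue/homology dichotomy airtight — i.e. verifying that $[Y:Z:X]\in\Aut(\mathcal{K}_6)$ is genuinely a non-homology (its matrix is the cyclic permutation matrix, not conjugate to any $\operatorname{diag}(1,1,\zeta_3)$), and that no other order-$3$ element of $\Aut(\mathcal{K}_6)$ outside the normal $\Z/21\Z$ exists that could be confused with it. A secondary subtlety is confirming that the core condition (the core of $^{\phi}\mathcal{C}_1$ equals the Klein polynomial) is compatible only with $\phi$ sending $\sigma$ to the permutation-type order-$3$ element; but once $\phi^{-1}\sigma\phi=[Y:Z:X]$ is forced, compatibility of the core is automatic from the shape $X^5Y+Y^5Z+XZ^5+\cdots$ being $[Y:Z:X]$-invariant. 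I would close by remarking that the explicit reduction of $^{\phi}\mathcal{C}_1$ to a $[Y:Z:X]$-invariant normal form is not needed for the claim, since we only need the \emph{group} equality $\Aut(\mathcal{C}_1)=\varrho_2(\Z/3\Z)$, which follows once all larger candidate subgroups inside $\Aut(\mathcal{K}_6)$ have been excluded.
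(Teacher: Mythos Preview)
Your reduction to the dichotomy $\Z/3\Z$ versus $(\Z/3\Z)^2$ is correct, but your attempt to exclude the $(\Z/3\Z)^2$ case breaks down. The element $\operatorname{diag}(1,\zeta_3,\zeta_3^{-1})$ is \emph{not} a homology: it is exactly the non-homology $\sigma$ you started with. In fact every order-$3$ element of $\Aut(\mathcal{K}_6)$ is a non-homology with eigenvalue set $\{1,\zeta_3,\zeta_3^{-1}\}$ (check this for $\sigma$, for $[Y:Z:X]$, and for their products), so they all lie in a single $\PGL_3(K)$-conjugacy class and no eigenvalue/homology dichotomy can separate them. Consequently, even if $\phi^{-1}\Aut(\mathcal{C}_1)\phi$ were the full Sylow $3$-subgroup $\langle\operatorname{diag}(1,\zeta_3,\zeta_3^{-1}),[Y:Z:X]\rangle$, nothing in your argument produces a contradiction. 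The fallback ``this case has already been exhausted in part (1)-(i)'' is also not valid: the cases of Theorem~\ref{teoHarui} are not mutually exclusive, and you must derive an actual contradiction with the descendant-of-$\mathcal{K}_6$ hypothesis.

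The paper's argument supplies the missing idea, and it is geometric rather than group-theoretic. Working in the original coordinates of $\mathcal{C}_1$, one classifies the order-$3$ elements of $\PGL_3(K)$ commuting with $\sigma$: they are either diagonal $\operatorname{diag}(1,s,t)$ with $s^3=t^3=1$, or of the form $[sY:tZ:X]$. A diagonal $\sigma'\notin\langle\sigma\rangle$ is (up to a power of $\sigma$) a genuine homology, and since homologies are preserved under conjugation this would force $\Aut(\mathcal{K}_6)$ to contain a homology of period $3$, which it does not. For $\sigma'=[sY:tZ:X]$ the key point is that the group $\langle\sigma,\sigma'\rangle$ fixes exactly one triangle, the reference triangle $\Delta$ with vertices $(1:0:0),(0:1:0),(0:0:1)$. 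But any descendant of $\mathcal{K}_6$ has the property that the triangle fixed by its automorphism group has all three vertices \emph{on} the curve (inherited from the core $X^5Y+Y^5Z+XZ^5$). Since the defining equation of $\mathcal{C}_1$ contains $X^6+Y^6+Z^6$, none of the reference points lies on $\mathcal{C}_1$, and this is the contradiction. Your proposal never touches this triangle/incidence obstruction, which is the heart of the proof.
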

\begin{proof} (of Claim \ref{claim12,312})
If $\mathcal{C}_1$ is a descendant of $\mathcal{K}_6$ with bigger automorphism group than $\langle\sigma\rangle$, then, from the \href{https://people.maths.bris.ac.uk/~matyd/GroupNames/1/C7sC3.html}{Group Structure of $\Z/21\Z\rtimes\Z/3\Z$} and since the automorphisms of $C$ have orders $\leq3$, $\Aut(\mathcal{C}_1)$ should be conjugate to a $\left(\Z/3\Z\right)^2$ in $\Aut(\mathcal{K}_6)$. Thus $\mathcal{C}_1$ has another automorphism $\sigma'\notin\langle\sigma\rangle$ of order $3$ that commutes with $\sigma$. Direct calculations show that we can take 
$\sigma'=\operatorname{diag}(1,s,t)$ with $s^3=t^3=1$ or $[sY:tZ:X]$ with $s,t\in K^*$.

In the first case, $\sigma'$ reduces to an homology as $\sigma'\notin\langle\sigma\rangle$. This is absurd because $\Aut(\mathcal{K}_6)$ does not contain any homologies of period $3$.
Regarding the second case, any descendant $\mathcal{C}'$ of the Klein curve $\mathcal{C}':X^5Y+Y^5Z+Z^5X+$\,lower terms in $X,Y,Z$ satisfies the property that its automorphism group fixes the triangle $\Delta$ whose vertices are the three reference points $(1:0:0),\,(0:1:0)$ and $(0:0:1)$, moreover, those points all lie on $\mathcal{C}'$. Because $\Delta$ is the only triangle fixed by $\langle\sigma,[sY:tZ:X]\rangle$ for any $s,t$ and because none of its vertices lies on $\mathcal{C}_1$, we conclude that $\Aut(\mathcal{C}_1)$ can not equal $\langle\sigma,[sY:tZ:X]\rangle$. This proves the claim for $\mathcal{C}_1$.
\end{proof}

\begin{claim}\label{claim12,312v2}
For $\mathcal{C}_2$ a descendant of $\mathcal{K}_6$, $\Aut(\mathcal{C}_2)$ is either conjugate to $\varrho_2(\Z/3\Z)$ or $\varrho_2\left((\Z/3\Z)^2\right)$.
\end{claim}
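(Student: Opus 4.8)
The plan is to follow the pattern of the proof of Claim \ref{claim12,312}, turning the problem into a question about the subgroup lattice of $\Aut(\mathcal{K}_6)=\Z/21\Z\rtimes\Z/3\Z$ supplied by Proposition \ref{FermatandKlein}. First I would dispose of the trivial alternative: if $\Aut(\mathcal{C}_2)=\langle\sigma\rangle$ then, since $\sigma=\operatorname{diag}(1,\zeta_3,\zeta_3^{-1})$, it is literally $\varrho_2(\Z/3\Z)$ and there is nothing to prove; so from here on I assume $\Aut(\mathcal{C}_2)\supsetneq\langle\sigma\rangle$. Being a descendant of $\mathcal{K}_6$, case (4) of Theorem \ref{teoHarui} gives a $\phi\in\PGL_3(K)$ with $G:=\phi^{-1}\Aut(\mathcal{C}_2)\phi\leq\Aut(\mathcal{K}_6)$.

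Next I would pin down the order of $G$. Because $\sigma$ has maximal order by hypothesis, every element of $\Aut(\mathcal{C}_2)$, hence of $G$, has order at most $3$, so $G$ has exponent dividing $3$. By Cauchy's theorem $7\nmid|G|$, and since $|G|$ divides $|\Aut(\mathcal{K}_6)|=63$ this forces $|G|\mid 9$; combined with $|G|=|\Aut(\mathcal{C}_2)|>3$ we get $|G|=9$. A group of order $9$ and exponent $3$ is $(\Z/3\Z)^2$, and an order-$9$ subgroup of $\Aut(\mathcal{K}_6)$ is precisely a Sylow $3$-subgroup.

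Then I would identify this Sylow subgroup up to conjugacy. All Sylow $3$-subgroups of $\Aut(\mathcal{K}_6)$ are $\Aut(\mathcal{K}_6)$-conjugate, and $\varrho_2((\Z/3\Z)^2)=\langle\sigma,[Y:Z:X]\rangle$ is one of them: using the generators of Proposition \ref{FermatandKlein}, the identity $\operatorname{diag}(1,\zeta_{21},\zeta_{21}^{-4})^7=\operatorname{diag}(1,\zeta_3,\zeta_3^{-1})=\sigma$ shows $\sigma,[Y:Z:X]\in\Aut(\mathcal{K}_6)$, and a short check shows $[Y:Z:X]$ centralizes $\sigma$ in $\PGL_3(K)$ (the three cyclic permutations of $\operatorname{diag}(1,\zeta_3,\zeta_3^{-1})$ coincide projectively), so $\langle\sigma,[Y:Z:X]\rangle\cong(\Z/3\Z)^2$ has order $9$, i.e. it is a Sylow $3$-subgroup. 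Hence $G$, and therefore $\Aut(\mathcal{C}_2)=\phi G\phi^{-1}$, is $\PGL_3(K)$-conjugate to $\varrho_2((\Z/3\Z)^2)$, which completes the claim.

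The computation is short because the real issue here is conceptual, and the one point of care is exactly where this claim \emph{diverges} from Claim \ref{claim12,312}: there the $(\Z/3\Z)^2$-case was eliminated because the triangle fixed by $\langle\sigma,[sY:tZ:X]\rangle$ has no vertex on $\mathcal{C}_1$, whereas the Klein polynomial $X^5Y+Y^5Z+XZ^5$ vanishes at $(1:0:0),(0:1:0),(0:0:1)$, so no such contradiction is available and the group genuinely occurs; the task is to pin it down rather than to rule it out. The remaining bookkeeping — that $\Z/21\Z\rtimes\Z/3\Z$ has no element of order $9$, so its Sylow $3$-subgroup is elementary abelian, consistent with exponent $3$ — is immediate from the structure of the semidirect product and need not be belabored.
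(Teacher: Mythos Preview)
Your argument is correct: the Sylow route cleanly pins down $\Aut(\mathcal{C}_2)$ as a $3$-Sylow of $\Aut(\mathcal{K}_6)$, hence $\PGL_3(K)$-conjugate to $\varrho_2((\Z/3\Z)^2)$, and the claim follows. This is genuinely different from the paper's proof, which is explicit rather than structural. There, one argues (as in Claim~\ref{claim12,312}) that an extra order-$3$ automorphism commuting with $\sigma$ must have the shape $[sY:tZ:X]$, then imposes $\sigma'\in\Aut(\mathcal{C}_2)$ to obtain $s=\zeta_{21}^r$, $t=\zeta_{21}^{-4r}$ together with the relations $\alpha_{0,2}=\zeta_{21}^{-12r}\alpha_{4,0}$, $\alpha_{2,4}=\zeta_{21}^{3r}\alpha_{4,0}$, $\alpha_{1,3}=\zeta_{21}^{-6r}\alpha_{3,2}$, $\alpha_{2,1}=\zeta_{21}^{3r}\alpha_{3,2}$, and finally exhibits a diagonal $\phi=\operatorname{diag}(1,\zeta_{21}^{r'},\zeta_{21}^{17r'})\in\Aut(\mathcal{K}_6)$ carrying $\mathcal{C}_2$ to the symmetric form $\mathcal{C}_2'$ with $\Aut(\mathcal{C}_2')=\varrho_2((\Z/3\Z)^2)$ on the nose. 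Your approach is shorter and conceptually cleaner for the bare claim; the paper's approach buys more, since the explicit parameter constraints and the normalizing $\phi$ are exactly the content of Theorem~\ref{thm3}\,(2)(i), which your Sylow argument does not recover. If you intend to use this proof in place of the paper's, you would still owe a separate computation to obtain that normal form.
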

\begin{proof} (of Claim \ref{claim12,312v2})
If $\mathcal{C}_2$ is a descendant of $\mathcal{K}_6$ with bigger automorphism group than $\langle\sigma\rangle$, then $\Aut(\mathcal{C}_2)=\langle\sigma,[sY:tZ:X]\rangle$ for some $s,t\in K^*$. For $\sigma'\in\Aut(\mathcal{C}_2)$,
$s=\zeta_{21}^r,\,$ $t=\zeta_{21}^{-4r},\,$ $\alpha_{0,2}=\zeta_{21}^{-12r}\alpha_{4,0},\,$ $\alpha_{2,4}=\zeta_{21}^{3r}\alpha_{4,0},\,$ $\alpha_{1,3}=\zeta_{21}^{-6r}\alpha_{3,2},\,$ $\alpha_{2,1}=\zeta_{21}^{3r}\alpha_{3,2},$ and $\mathcal{C}_2$ reduces to
\begin{eqnarray*}
  X^5Y&+&Y^5Z+XZ^5+\alpha_{4,0}\left(X^4Z^2+\zeta_{21}^{3r}X^2Y^4+\zeta_{21}^{-12r}Y^2Z^4\right)\\
  &+& \alpha_{3,2}XYZ\left(X^2Y+\zeta_{21}^{3r}XZ^2+\zeta_{21}^{-6r}Y^2Z\right)=0.
\end{eqnarray*}
In any situation, there exists a change of variables $\phi=\operatorname{diag}(1,\zeta_{21}^{r'},\zeta_{21}^{17r'})\in\Aut(\mathcal{K}_6)$ such that $21\,|\,18r'+r,\,12r'-4r$ for some $r'\in\{0,1,...,20\}$ that transforms $\mathcal{C}_2$ up to $K$-isomorphism to
\begin{eqnarray*}
  \mathcal{C}_2':X^5Y&+&Y^5Z+XZ^5+\alpha_{4,0}\zeta_{21}^{4r}\left(X^4Z^2+X^2Y^4+Y^2Z^4\right)\\
  &+& \alpha_{3,2}\zeta_{21}^{-r}XYZ\left(X^2Y+XZ^2+Y^2Z\right)=0,
\end{eqnarray*}
where $\Aut(\mathcal{C}_2')=\varrho_{2}\left((\Z/3\Z)^2\right)=\langle\sigma,[Y:Z:X]\rangle$. In particular, we must have $\left(\alpha_{2,4},\alpha_{1,3}\right)\neq(0,0)$ or $\operatorname{diag}(1,\zeta_{21},\zeta_{21}^{-4})\in\Aut(\mathcal{C}_2')$ of order $21>3$.

This completes the proof, which in turns shows Theorem \ref{thm3}, (2)-(i).
\end{proof}
\item Now, assume that $\mathcal{C}_i$ is a descendant of the Fermat curve $\mathcal{F}_6$. From the \href{https://people.maths.bris.ac.uk/~matyd/GroupNames/193/C6%5E2sS3.html}{Group structure of $\Aut(\mathcal{F}_6)$},
one sees that if $\mathcal{C}_i$ is a descendant of $\mathcal{F}_6$ with bigger automorphism group than $\langle\sigma\rangle$, then $\Aut(\mathcal{C}_i)$ is conjugate to one of the following groups inside $\Aut(\mathcal{F}_6)$: $$\left(\Z/3\Z\right)^2,\,\operatorname{S}_3,\,\operatorname{A}_4,\,\Z/3\Z\rtimes\operatorname{S}_3,\,\operatorname{He}_3.$$
In what follows, we treat each of these cases for $\mathcal{C}_1$ and $\mathcal{C}_2$ respectively, more precisely, Claim \ref{claim13,312v2} and Claim \ref{claim13,312v1} below.

\begin{claim}\label{claim13,312v2}
For $\mathcal{C}_1$ a descendant of $\mathcal{F}_6$, $\Aut(\mathcal{C}_1)$ is conjugate to $\varrho_2(\Z/3\Z),\,$ $\varrho_2\left(\operatorname{S}_3\right),\,$ $\varrho_1\left(\Z/3\Z\rtimes\operatorname{S}_3\right),\,$ $\varrho_{1}\left((\Z/3\Z)^2\right),\,\varrho_{2}\left((\Z/3\Z)^2\right)$ or $\varrho_1(\operatorname{A}_{4})$.
\end{claim}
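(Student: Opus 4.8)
The plan is to run through the five candidate groups $(\Z/3\Z)^2$, $\operatorname{S}_3$, $\operatorname{A}_4$, $\Z/3\Z\rtimes\operatorname{S}_3$, $\operatorname{He}_3$ exactly as in the proof of Theorem \ref{thm2}, but now keeping track of the extra monomials allowed by the non-homology $\sigma=\operatorname{diag}(1,\zeta_3,\zeta_3^{-1})$ rather than the homology. Since $\mathcal{C}_1$ is assumed to be a descendant of $\mathcal{F}_6$, there is $\phi\in\PGL_3(K)$ with $\phi^{-1}\Aut(\mathcal{C}_1)\phi\le\Aut(\mathcal{F}_6)=(\Z/6\Z)^2\rtimes\operatorname{S}_3$, and because $\sigma$ has order $3$ we may conjugate so that $\phi^{-1}\langle\sigma\rangle\phi$ is one of the finitely many order-$3$ subgroups of $\Aut(\mathcal{F}_6)$; by Proposition \ref{FermatandKlein} the order-$3$ elements of $\Aut(\mathcal{F}_6)$ are $\operatorname{diag}(1,s,t)$ with $s^3=t^3=1$, and $[sY:tZ:X]$, $[tZ:X:sY]$ with $s^6=t^6=1$. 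The non-homologies among these are conjugate in $\Aut(\mathcal{F}_6)$ to $\sigma$ or $\sigma^{-1}$, so we may take $\phi^{-1}\sigma\phi=\sigma$.

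First I would settle the abelian case. If $\phi^{-1}\Aut(\mathcal{C}_1)\phi$ is a $(\Z/3\Z)^2$ inside $\Aut(\mathcal{F}_6)$, then $\mathcal{C}_1$ carries an extra order-$3$ automorphism $\sigma'$ commuting with $\sigma$; as in the computations of the previous sections, solving the commutation relation inside $\Aut(\mathcal{F}_6)$ forces $\sigma'$ to be either a diagonal element or of the shape $[sY:tZ:X]$. The diagonal possibility produces, after a normalizing change of variables in $\operatorname{PBD}(2,1)$, the family of case (1)-(iii) (or reduces further to (1)-(i)), giving $\varrho_1((\Z/3\Z)^2)$; the monomial shape $[sY:tZ:X]$ produces, after normalizing, the family $\mathcal{C}''_1$ of case (1)-(iii) with $\Aut=\varrho_2((\Z/3\Z)^2)$. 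Next, for $\operatorname{S}_3$ I would use that $\operatorname{S}_3=\langle\sigma\rangle\rtimes\langle\tau\rangle$ with $\tau$ an involution inverting $\sigma$; running through the involutions of $\mathcal{F}_6$ listed in Proposition \ref{FermatandKlein} and imposing $\tau\sigma\tau=\sigma^{-1}$ pins down $\tau$ up to normalization to $[X:Z:Y]$, hence to $\varrho_2(\operatorname{S}_3)$, matching case (1)-(ii). The group $\Z/3\Z\rtimes\operatorname{S}_3$ then arises exactly when both an extra commuting $\sigma'$ and such a $\tau$ are present simultaneously, yielding $\varrho_1(\Z/3\Z\rtimes\operatorname{S}_3)$ as in (1)-(ii).

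For $\operatorname{A}_4$ and $\operatorname{He}_3$ I would argue more structurally. The Heisenberg case requires an order-$3$ element $\sigma''$ commuting with $\sigma$ but not lying in $\langle\sigma,\sigma'\rangle$; as in the proof of Theorem \ref{thm2}, the computation of centralizers inside $\Aut(\mathcal{F}_6)$ shows every order-$3$ element commuting with $\sigma$ or with $\sigma'$ already lies in $\langle\sigma,\sigma'\rangle$, so $\operatorname{He}_3$ cannot occur — meaning that branch contributes nothing new. The $\operatorname{A}_4$ case is where the real work is: here $\Aut(\mathcal{C}_1)$ contains a Klein four-group $V$ together with $\sigma$ cyclically permuting its three involutions, and one must check that such a configuration inside $\Aut(\mathcal{F}_6)$ is $\Aut(\mathcal{F}_6)$-conjugate, after the normalization $\phi^{-1}\sigma\phi=\sigma$, to $\varrho_1(\operatorname{A}_4)=\langle\varrho_1((\Z/2\Z)^2),[Y:Z:X]\rangle$. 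Concretely this means producing the two commuting involutions of $V$ among the involutions of $\mathcal{F}_6$, verifying $\sigma$ conjugates them correctly, and then reducing $\mathcal{C}_1$ by an allowed change of variables to the family $\mathcal{C}_{1,\lambda,\mu}$ of case (1)-(iv); invariance of that equation under $\varrho_1(\operatorname{A}_4)$ is then a direct monomial check. The main obstacle I expect is this last step: unlike the diagonal and dihedral cases, the $\operatorname{A}_4$ reduction involves simultaneously diagonalizing the $V$-action and matching the resulting coefficient constraints with the explicit rational functions $f_1,f_2$ of $(\lambda,\mu)$, and one must be careful that the normalizing change of variables preserves both $\sigma$ and the Fermat core $X^6+Y^6+Z^6$ so that we stay inside the descendant framework. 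A secondary subtlety is bookkeeping the sign/root-of-unity ambiguities ($s^6=t^6=1$, the $\pm$ in the involutions) so that genuinely distinct components are not conflated and the listed cases in the statement are exhaustive.
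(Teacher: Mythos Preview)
Your overall architecture is right, but two of the branches rest on claims that are false for the non-homology $\sigma$, and this is precisely where the paper's proof diverges from the Theorem~\ref{thm2} template you are invoking.

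First, the normalization $\phi^{-1}\sigma\phi=\sigma$ cannot be arranged in general. The non-homology order-$3$ elements of $\Aut(\mathcal{F}_6)$ are \emph{not} all $\Aut(\mathcal{F}_6)$-conjugate to $\sigma$ or $\sigma^{-1}$: the permutation $[Y:Z:X]$ is a non-homology of order $3$, and conjugation inside $\Aut(\mathcal{F}_6)=(\Z/6\Z)^2\rtimes\operatorname{S}_3$ sends diagonal elements to diagonal elements and permutation-type elements to permutation-type elements. This matters because the order-$3$ elements of $\varrho_i(\operatorname{A}_4)$ are of permutation type, not diagonal; so under your normalization $\phi^{-1}\Aut(\mathcal{C}_1)\phi$ can never equal $\varrho_1(\operatorname{A}_4)$, since $\sigma\notin\varrho_1(\operatorname{A}_4)$. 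The paper instead sets $\phi^{-1}\sigma\phi=[Y:Z:X]$ (respectively $[\zeta_6^{-1}Y:Z:X]$), solves explicitly for the six possible shapes $\phi_i$ (respectively $\psi_i$), and then imposes $\phi_i\operatorname{diag}(1,1,-1)\phi_i^{-1}\in\Aut(\mathcal{C}_1)$ to extract the parameter constraints defining $\mathcal{C}_{1,\lambda,\mu}$. Crucially, there are \emph{two} $\Aut(\mathcal{F}_6)$-conjugacy classes of $\operatorname{A}_4$, namely $\varrho_1(\operatorname{A}_4)$ and $\varrho_2(\operatorname{A}_4)$, and you do not address the second: the paper rules it out by checking that none of the $\psi_i$ carries $\mathcal{C}_1$ to an equation with Fermat core $X^6+Y^6+Z^6$. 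Without that step the $\operatorname{A}_4$ branch is incomplete.

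Second, your $\operatorname{He}_3$ exclusion does not go through as stated. In Theorem~\ref{thm2} the homology $\operatorname{diag}(1,1,\zeta_3)$ has a small centralizer among order-$3$ elements of $\Aut(\mathcal{F}_6)$, but for the non-homology $\sigma=\operatorname{diag}(1,\zeta_3,\zeta_3^{-1})$ every $[sY:tZ:X]$ commutes with $\sigma$, so the set of order-$3$ elements commuting with $\sigma$ is far larger than any single $(\Z/3\Z)^2$. Hence one cannot conclude that all such elements lie in $\langle\sigma,\sigma'\rangle$. The paper's argument is instead equation-based: when $\sigma'$ is diagonal, the candidate $\sigma''=[s''Y:t''Z:X]$ forces $\alpha_{3,3}=\pm\alpha_{3,0}$ and $\alpha_{0,3}=\pm\alpha_{3,0}$, whereupon $[Y:X:t''Z]$ becomes an automorphism of even order, contradicting $|\operatorname{He}_3|=27$; when $\sigma'=[s'Y:t'Z:X]$, after reducing to $\mathcal{C}''_1$ the extra generator would have to be $\operatorname{diag}(1,1,\zeta_3)$, which is excluded by $\alpha'_{1,1}\neq0$. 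You will need to supply this kind of argument rather than the centralizer shortcut.
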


\begin{claim}\label{claim13,312v1}
For $\mathcal{C}_2$ a descendant of $\mathcal{F}_6$, $\Aut(\mathcal{C}_2)$ is conjugate to $\varrho_2(\Z/3\Z),\,$ $\varrho_{2}((\Z/3\Z)^2)$ or $\varrho_2(\operatorname{A}_4)$.
\end{claim}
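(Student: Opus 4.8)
The plan is to run, for $\mathcal{C}_2$, the same kind of case analysis as in the proof of Theorem~\ref{thm2}. By the reduction made just before the claims, if $\Aut(\mathcal{C}_2)$ is strictly larger than $\langle\sigma\rangle=\varrho_2(\Z/3\Z)$, then it is $\PGL_3(K)$-conjugate, inside $\Aut(\mathcal{F}_6)$, to one of $(\Z/3\Z)^2$, $\operatorname{S}_3$, $\operatorname{A}_4$, $\Z/3\Z\rtimes\operatorname{S}_3$ or $\operatorname{He}_3$, and I treat these five abstract types one by one. The decisive rigidity is that in the given $9$-monomial model of $\mathcal{C}_2$ the automorphism $\sigma=\operatorname{diag}(1,\zeta_3,\zeta_3^{-1})$ acts on $F$ by the scalar $\zeta_3$, so the defining polynomial lies in the $\zeta_3$-eigenspace of $\sigma$; any image $F\circ g$ of $F$ under an automorphism must again be a scalar multiple of $F$, which sharply limits the admissible monomials.

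First I would exclude $\operatorname{S}_3$ and $\Z/3\Z\rtimes\operatorname{S}_3$. In both, every subgroup of order $3$ is inverted by an involution (by a transposition in $\operatorname{S}_3$; in $\Z/3\Z\rtimes\operatorname{S}_3$ all order-$3$ elements lie in the normal $(\Z/3\Z)^2$, on which a transposition of the $\operatorname{S}_3$-quotient acts by $-1$), so $\Aut(\mathcal{C}_2)$ would contain an involution $\tau$ with $\tau\sigma\tau^{-1}=\sigma^{-1}$. Such a $\tau$ must fix the eigenpoint $(1:0:0)$ of $\sigma$ and interchange $(0:1:0)$ with $(0:0:1)$, hence $\tau=[X:sZ:s^{-1}Y]$ for some $s\in K^{*}$; but then $F\circ\tau$ contains the monomial $X^{5}Z$ (the image of $X^{5}Y$), which is not in the linear span of the monomials of $\mathcal{C}_2$, so $F\circ\tau$ is not a scalar multiple of $F$ --- a contradiction. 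Next I exclude $\operatorname{He}_3$: it is the unique-up-to-conjugacy Sylow $3$-subgroup of $\Aut(\mathcal{F}_6)$, so after a further change of variables $\mathcal{C}_2$ becomes semi-invariant under $\langle\operatorname{diag}(1,\zeta_3,1),\operatorname{diag}(1,1,\zeta_3),[Y:Z:X]\rangle$. Semi-invariance under the diagonal $(\Z/3\Z)^2$ forces the equation into $K[X^{3},Y^{3},Z^{3}]$ with a fixed character, and semi-invariance under $[Y:Z:X]$ then forces the shape $a(X^{6}+\mu^{2}Y^{6}+\mu Z^{6})+d(X^{3}Y^{3}+\mu X^{3}Z^{3}+\mu^{2}Y^{3}Z^{3})$ with $\mu^{3}=1$ (the remaining $\operatorname{He}_3$-semi-invariant sextics, $XYZ$ times a cubic and $X^{2}Y^{2}Z^{2}$, are singular). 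A diagonal rescaling identifies this curve with the totally symmetric one $a(X^{6}+Y^{6}+Z^{6})+d'(X^{3}Y^{3}+Y^{3}Z^{3}+Z^{3}X^{3})=0$, whose automorphism group contains $(\Z/3\Z)^2\rtimes\operatorname{S}_3$ and hence an element of order $6>3$ --- impossible for $\mathcal{C}_2$.

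For $\Aut(\mathcal{C}_2)\cong(\Z/3\Z)^2$ I write $\Aut(\mathcal{C}_2)=\langle\sigma,\sigma'\rangle$ with $\sigma'$ of order $3$, commuting with $\sigma$, and $\sigma'\notin\langle\sigma\rangle$. The centralizer of $\sigma$ in $\PGL_3(K)$ is $T\rtimes\langle[Y:Z:X]\rangle$ with $T$ the diagonal torus; if $\sigma'\in T$ then $\langle\sigma,\sigma'\rangle$ is the full diagonal $(\Z/3\Z)^2$ and $F$ would be semi-invariant under it with $\sigma$-character $\zeta_3$, which confines $F$ to a space spanned by monomials all divisible by a common product of two coordinates, i.e.\ $\mathcal{C}_2$ singular --- excluded. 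Hence $\sigma'$ is rotation-type, and conjugating by a diagonal matrix (which centralizes $\sigma$ and preserves the normalization) I may take $\sigma'=[Y:Z:X]$, so $\Aut(\mathcal{C}_2)=\langle\sigma,[Y:Z:X]\rangle=\varrho_2((\Z/3\Z)^2)$; reducing $\mathcal{C}_2$ accordingly gives the form $\mathcal{C}'_2$ (the condition $(\alpha_{2,4},\alpha_{1,3})\neq(0,0)$ being exactly what prevents $\operatorname{diag}(1,\zeta_{21},\zeta_{21}^{-4})$, of order $21$, from occurring). Finally, for $\Aut(\mathcal{C}_2)\cong\operatorname{A}_4$: by the observation used in the proof of Theorem~\ref{thm2}, up to $\Aut(\mathcal{F}_6)$-conjugacy this group is $\varrho_1(\operatorname{A}_4)$ or $\varrho_2(\operatorname{A}_4)$, and these two are $\PGL_3(K)$-conjugate through a diagonal $\phi=\operatorname{diag}(1,t^{2},t)$ with $t^{3}=\zeta_6$; so I may assume $\Aut=\varrho_1(\operatorname{A}_4)$, and then semi-invariance under $\langle\operatorname{diag}(1,1,-1),\operatorname{diag}(1,-1,1)\rangle$ makes every monomial have even exponents, while semi-invariance under $[Y:Z:X]$ pins the equation down to $X^{6}+Y^{6}+Z^{6}+p(X^{4}Y^{2}+Y^{4}Z^{2}+Z^{4}X^{2})+q(X^{4}Z^{2}+X^{2}Y^{4}+Y^{2}Z^{4})+rX^{2}Y^{2}Z^{2}=0$ with $p\neq q$ (equality would add the transposition $[Y:X:Z]$ and reintroduce elements of order $>3$). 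This is the family $\mathcal{C}_{1,\lambda,\mu}$, equivalently $\mathcal{C}_{2,\lambda,\mu}$, so $\Aut(\mathcal{C}_2)$ is conjugate to $\varrho_2(\operatorname{A}_4)$, which finishes the proof.

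I expect the genuine obstacle to sit inside the $\operatorname{A}_4$ case --- not the conjugacy bookkeeping, but producing the explicit parametrization: one has to chase the change of variables from the $9$-parameter normal form of $\mathcal{C}_2$, under the precise algebraic constraints on the $\alpha_{i,j}$ that force the extra automorphisms, down to the coefficients $g_1(\lambda,\mu)$ and $g_2(\lambda,\mu)$ of $\mathcal{C}_{2,\lambda,\mu}$, and then check that the resulting curve has automorphism group exactly $\varrho_2(\operatorname{A}_4)$. The analogous but much shorter computation is the explicit reduction to $\mathcal{C}'_2$ in the $(\Z/3\Z)^2$ case.
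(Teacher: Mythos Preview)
Your overall strategy is sound and, for the $(\Z/3\Z)^2$ and $\operatorname{He}_3$ cases, actually cleaner than the paper's: rather than chasing a specific extra automorphism $\sigma'$ and checking it against the core of $\mathcal{C}_2$, you classify the smooth sextic semi-invariants of a fixed $\operatorname{He}_3$ directly and observe they all acquire an order-$6$ automorphism. Likewise, for $\operatorname{A}_4$ you exploit the $\PGL_3(K)$-conjugacy of $\varrho_1(\operatorname{A}_4)$ and $\varrho_2(\operatorname{A}_4)$, whereas the paper instead verifies by hand that the six candidate transformations $\phi_i$ to $\varrho_1(\operatorname{A}_4)$ fail to produce a Fermat core while the $\psi_i$ to $\varrho_2(\operatorname{A}_4)$ succeed; your route suffices for the conjugacy statement of the claim, while the paper's explicit computation is what actually feeds into Theorem~\ref{thm3}\,(2)-(ii) and Corollary~\ref{3123}. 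Your final paragraph correctly anticipates this.

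There is, however, a genuine gap in your exclusion of $\operatorname{S}_3$ and $\Z/3\Z\rtimes\operatorname{S}_3$. You assert that an involution $\tau$ with $\tau\sigma\tau^{-1}=\sigma^{-1}$ must fix the ``eigenpoint'' $(1:0:0)$ and swap the other two, forcing $\tau=[X:sZ:s^{-1}Y]$. This reasoning is valid in $\operatorname{GL}_3(K)$ but fails in $\PGL_3(K)$: the eigenvalues $\{1,\zeta_3,\zeta_3^{-1}\}$ of $\sigma$ are only defined up to a common scalar, and as a set they are invariant under inversion, so \emph{any} transposition of the three reference points (with suitable scaling) conjugates $\sigma$ to $\sigma^{-1}$. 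Concretely, $[sY:s^{-1}X:Z]$ and $[sZ:Y:s^{-1}X]$ also satisfy $\tau\sigma\tau^{-1}=\sigma^{-1}$; the paper checks all three shapes and observes that none of them preserves the core $X^5Y+Y^5Z+Z^5X$. The repair is immediate (e.g.\ $X^5Y\mapsto s^4XY^5$ and $X^5Y\mapsto s^5YZ^5$ respectively, neither in the monomial span of $\mathcal{C}_2$), but as written your argument omits two of the three cases. A smaller point: your closing identification ``$\mathcal{C}_{1,\lambda,\mu}$, equivalently $\mathcal{C}_{2,\lambda,\mu}$'' is not right---these are distinct two-parameter families, which is precisely the content of Corollary~\ref{3123}---but this lies in the part you yourself flag as beyond the claim.
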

\end{itemize}

\begin{proof} (of Claim \ref{claim13,312v2})
- If $\Aut(\mathcal{C}_1)$ is conjugate to $\operatorname{S}_3$ or $\Z/3\Z\rtimes\operatorname{S}_3$ inside $\Aut(\mathcal{F}_6)$, then $\mathcal{C}_1$ has an involution $\tau$ such that $\tau\sigma\tau=\sigma^{-1}$. Similarly as before, this holds only if
      $\alpha_{3,3}=\pm\alpha_{3,0}$ and $\alpha_{1,1}=\pm\alpha_{1,4}$,\,$\alpha_{0,3}=\pm\alpha_{3,0}$ and $\alpha_{4,1}=\pm\alpha_{1,4}$, or $\alpha_{3,3}=\pm\alpha_{0,3}$ and $\alpha_{1,1}=\pm\alpha_{4,1}$. In this scenario, $\mathcal{C}_1$ is $K$-isomorphic to
\begin{eqnarray*}
  \mathcal{C}'_1&:&X^6+Y^6+Z^6+\alpha'_{4,1}X^4YZ+\alpha'_{3,3}X^3(Y^3+Z^3)+\alpha'_{2,2}X^2Y^2Z^2\\
  &+& \alpha'_{1,2}XYZ(Y^3+Z^3)+\alpha'_{0,3}Y^3Z^3=0,
\end{eqnarray*}
where $\varrho_2(\operatorname{S}_3)$ generated by $\sigma=\operatorname{diag}(1,\zeta_3,\zeta_3^{-1})$ and $\tau=[X:Z:Y]$ is a subgroup of $\Aut(\mathcal{C}'_1)$. Furthermore, if $\Aut(\mathcal{C}'_1)$ equals $\Z/3\Z\rtimes\operatorname{S}_3$, then it must contain another automorphism $\sigma'\notin\langle\sigma,\tau\rangle$ of order $3$ that commutes with $\sigma$ and satisfies $\tau\sigma'\tau=\sigma'^{-1}$.
Thus $\sigma'=[s'Y:s'^{-1}Z:X]$ and the invariance of the defining equation for $\mathcal{C}'_1$ under the action of $\sigma'$ yields $s'^3=1$,\,$\alpha'_{4,1}=\alpha'_{1,2}$ and $\alpha'_{3,3}=\alpha'_{0,3}$. Hence $\mathcal{C}'_1$ becomes
\begin{eqnarray*}
  X^6&+&Y^6+Z^6+\alpha'_{1,2}XYZ(X^3+Y^3+Z^3)+\alpha'_{3,3}(X^3Y^3+Y^3Z^3+Z^3X^3)\\
  &+&\alpha'_{2,2}X^2Y^2Z^2=0
\end{eqnarray*}
with $\Aut(\mathcal{C}'_1)=\varrho_1(\Z/3\Z\rtimes\operatorname{S}_3)$. This shows the rest of Theorem \ref{thm3}, (1)-(ii).

- If $\Aut(\mathcal{C}_1)$ is conjugate to $(\Z/3\Z)^2$ or $\operatorname{He}_3$ inside $\Aut(\mathcal{F}_6)$, then $\mathcal{C}_1$ would have an automorphism $\sigma'\notin\langle\sigma\rangle$ of order $3$ that commutes with $\sigma$ since every copy of $\Z/3\Z$ in any of these groups is contained in a $(\Z/3\Z)^2$. Similarly as before, we can take $\sigma'=\operatorname{diag}(1,s',t')$ with $s'^3=t'^3=1$ or $[s'Y:t'Z:X]$ with $s',t'\in K^*$.
\begin{enumerate}
  \item Suppose that $\sigma'=\operatorname{diag}(1,s',t')\in\Aut(\mathcal{C}_1)$. Because $\sigma'\notin\langle\sigma\rangle$, we have $\sigma'=\operatorname{diag}(1,1,\zeta_3),\,$ $\operatorname{diag}(\zeta_3,1,1)$ or $\operatorname{diag}(1,\zeta_3,1)$. 
      Consequently, $\alpha_{4,1}=\alpha_{2,2}=\alpha_{1,1}=\alpha_{1,4}=0$ and $\mathcal{C}_1$ reduces to
$$
  X^6+Y^6+Z^6+\alpha_{3,3}X^3Y^3+\alpha_{3,0}X^3Z^3+\alpha_{0,3}Y^3Z^3=0,
$$
with $\varrho_1((\Z/3\Z)^2)\subseteq\Aut(\mathcal{C}_1)$. On the other hand, $\Aut(\mathcal{C}_1)$ equals $\operatorname{He}_3$ only if it contains an extra automorphism $\sigma''\notin\langle\sigma,\sigma'\rangle$ of order $3$ that commutes with $\sigma$ and satisfies $\sigma''\sigma'\sigma''^{-1}=\sigma'\sigma^{-1}$. This gives us $\sigma''=[s''Y:t''Z:X]$ for some $s'',t''\in K^*$. Hence $s''^6=t''^6=1$,\, $\alpha_{3,3}=s''^3\alpha_{3,0},\,$ $\alpha_{0,3}=t''^3\alpha_{3,0}$, and $\mathcal{C}_1$ becomes of the form:  
$$X^6+Y^6+Z^6+\alpha_{3,0}\left(\pm X^3Y^3+X^3Z^3+t''^3Y^3Z^3\right)=0.$$ 
In particular, $[Y:X:t''Z]$ is an automorphism for $\mathcal{C}_1$ of order divisible by $2$. This is a contradiction as $2\nmid|\operatorname{He}_3|(=27)$.
  \item Suppose that $\sigma'_{s,t}=[s'Y:t'Z:X]\in\Aut(\mathcal{C}_1)$.
For this to be true, we should have $s'^6=t'^6=1,\,$ $\alpha_{4,1}=s't'\alpha_{1,1},\,$ $\alpha_{1,4}=s'^5t'^2\alpha_{1,1}$,\, $\alpha_{3,3}=s'^3\alpha_{3,0},\,$ $\alpha_{0,3}=t'^3\alpha_{3,0}=\pm\alpha_{3,0}$, and $\mathcal{C}_1$ is defined by
\begin{eqnarray*}
X^6&+&Y^6+Z^6+\alpha_{1,1}XYZ(s't'X^3\pm\frac{1}{s't'}Y^3+ Z^3)+\alpha_{2,2}X^2Y^2Z^2\\
  &+&\alpha_{3,0}(s'^3 X^3Y^3+X^3Z^3\pm Y^3Z^3)=0,
\end{eqnarray*}
such that $(s't')^2=1$ whenever $\alpha_{2,2}\neq0$. Consequently, it must be the case that $\alpha_{2,2}=0$ and $\alpha_{1,1}\neq0$ or $[t'Y:t'^{-1}X:Z]$ would be an extra involution, which violates the fact that $|\operatorname{Aut}(\mathcal{C}_1)|=9$ or $27$. That is, $s't'=\zeta_6^{\ell}$ for some $\ell\neq 0$ or $3\,\operatorname{mod}\,6$, and $\mathcal{C}_1$ becomes
\begin{eqnarray*}
 X^6+Y^6+Z^6&+&\alpha_{1,1}XYZ(\zeta_6^{\ell}X^3\pm\zeta_6^{-\ell}Y^3+ Z^3)+\\
 &+&\alpha_{3,0}( \pm (-1)^{\ell} X^3Y^3+X^3Z^3\pm Y^3Z^3)=0,
   \end{eqnarray*}
   for some $\alpha_{1,1}, \alpha_{3,0}\in K^*$.  
   Applying the projective change of variables $\phi=\operatorname{diag}(1,\frac{\sqrt[3]{s't'}}{s'},\frac{1}{\sqrt[3]{s't'}})$ we get
\begin{eqnarray*}
\mathcal{C}''_1:X^6+\zeta_{6}^{2\ell}Y^6+\zeta_6^{-2\ell}Z^6&+&\alpha'_{1,1}XYZ(X^3+\zeta_6^{2\ell}Y^3+ \zeta_6^{-2\ell}Z^3)+\\
 &+&\alpha'_{3,0}(  X^3Y^3+\zeta_6^{-2\ell}X^3Z^3+ \zeta_6^{2\ell}Y^3Z^3)=0.
   \end{eqnarray*}
Now with $\sigma$ and $\sigma'=[Y:Z:X]$ as automorphisms for  $\mathcal{C}''_1$, we have that  $\langle\sigma,\sigma'\rangle=\varrho_2((\Z/3\Z)^2)\subseteq\Aut(\mathcal{C}''_1)$. Again it is impossible that we can enlarge $\operatorname{Aut}(\mathcal{C}''_1)$ to $\operatorname{He}_3$, since this requires $\operatorname{diag}(1,1,\zeta_3)$ to be in $\operatorname{Aut}(\mathcal{C}''_1)$. This cannot be as $\alpha'_{1,1}=\frac{\alpha_{1,1}\zeta_6^{\ell}}{s}\neq0$.

\end{enumerate}

- If $\Aut(\mathcal{C}_1)$ is conjugate to an $\operatorname{A}_4$ inside $\Aut(\mathcal{F}_6)$, then it should be $\varrho_i(\operatorname{A}_{4})$ with $i=1$ or $2$.
\begin{enumerate}
  \item First, suppose that $\phi^{-1}\Aut(\mathcal{C}_1)\phi=\varrho_1(\operatorname{A}_{4})$. As all subgroups of $\operatorname{A}_{4}$ of order $3$ are $\operatorname{A}_{4}$-conjugated, there is no loss of generality to take
$\phi^{-1}\sigma\phi=[Y:Z:X]$ or $[Z:X:Y]$ . In particular, $\phi$ has one of the following shapes:
$$\phi_1:=
\left(
  \begin{array}{ccc}
    1 & 1 & 1 \\
    \lambda & \zeta_{3}^{-1}\lambda & \zeta_3\lambda \\
   \mu & \zeta_{3}\mu & \zeta_3^{-1}\mu \\
  \end{array}
\right),\,\phi_2:=\left(
  \begin{array}{ccc}
    \mu & \zeta_{3}\mu & \zeta_3^{-1}\mu \\
    1 & 1 & 1 \\
       \lambda & \zeta_{3}^{-1}\lambda & \zeta_3\lambda \\
  \end{array}\right),\,
  \phi_3:=\left(
  \begin{array}{ccc}
    \lambda & \zeta_{3}^{-1}\lambda & \zeta_3\lambda \\
   \mu & \zeta_{3}\mu & \zeta_3^{-1}\mu \\
    1 & 1 & 1 \\
  \end{array}
\right),$$
$$\phi_4:=
\left(
  \begin{array}{ccc}
    1 & 1 & 1 \\
    \lambda & \zeta_{3}\lambda & \zeta_3^{-1}\lambda \\
   \mu & \zeta_{3}^{-1}\mu & \zeta_3\mu \\
  \end{array}
\right),\,\phi_5:=\left(
  \begin{array}{ccc}
    \mu & \zeta_{3}^{-1}\mu & \zeta_3\mu \\
    1 & 1 & 1 \\
       \lambda & \zeta_{3}\lambda & \zeta_3^{-1}\lambda \\
  \end{array}\right),\,
  \phi_6:=\left(
  \begin{array}{ccc}
    \lambda & \zeta_{3}\lambda & \zeta_3^{-1}\lambda \\
   \mu & \zeta_{3}^{-1}\mu & \zeta_3\mu \\
    1 & 1 & 1 \\
  \end{array}
\right),
$$
for some $\lambda,\mu\in K^*$.

Now, we handle each of these situations to determine the restrictions on the defining equation of $\mathcal{C}_1$ for which this holds.
\begin{itemize}
  \item For $\phi_{1}\operatorname{diag}(1,1,-1)\phi_{1}^{-1}$ (respectively $\phi_{4}\operatorname{diag}(1,1,-1)\phi_{4}^{-1}$) to be in $\Aut(\mathcal{C}_1)$, we must eliminate the coefficients of $X^5Z,\,X^5Y,\,Y^5Z,\,XZ^5,\,$ $YZ^5,\,$ $X^4Y^2,\,X^4Z^2$ from the transformed equation $^{\phi_{i}\operatorname{diag}(1,1,-1)\phi_{i}^{-1}}\mathcal{C}_1=\mathcal{C}_1$ with $i=1$ and $4$ respectively. In this way, we obtain:
\begin{eqnarray*}
  \alpha_{4,1}&=&\dfrac{2\left(29-54\lambda^6-54\mu^6\right)}{27\lambda\mu},\,\alpha_{3,3}=\dfrac{2\left(81\mu^6-27\lambda^6-26\right)}{27\lambda^3},\\
  \alpha_{3,0}&=&\dfrac{2\left(81\lambda ^6-27\mu^6-26\right)}{27\mu ^3},\,\alpha_{1,4}=\dfrac{2\left(27\lambda^6-54\mu ^6-52\right)}{27\lambda^4\mu},\\
\alpha_{1,1}&=&\dfrac{2\left(27\mu^6-54\lambda^6-52\right)}{27\lambda\mu ^4},\,\alpha_{0,3}=\dfrac{2\left(82-27\lambda^6-27\mu ^6\right)}{27\lambda^3\mu^3},\\
\alpha_{2,2}&=&\dfrac{9\lambda^6+9\mu^6+10}{3\lambda^2\mu^2}.
\end{eqnarray*}
In particular, $\mathcal{C}_1$ is $K$-isomorphic via $\phi_1$  (respectively $\phi_4$ followed by $Y\leftrightarrow Z$) to
$\mathcal{C}_{1,\lambda,\mu}$ described in Theorem \ref{thm3}, (1)-(iii).
\item For $\phi_{2}\operatorname{diag}(1,1,-1)\phi_{2}^{-1}$ (respectively $\phi_{5}\operatorname{diag}(1,1,-1)\phi_{5}^{-1}$) to be in $\Aut(\mathcal{C}_1)$, one notices that $\phi_2=[Z:X:Y]\,\phi_1=\phi_1\,\circ\,[Z:X:Y]$ (respectively $\phi_5=[Z:X:Y]\,\phi_4=\phi_4\,\circ\,[Z:X:Y]$). This means that we get the same conclusion as above up to a permutation of the parameters, more precisely, after
      \begin{eqnarray*}
      \left(\alpha_{4,1},\alpha_{1,1},\alpha_{1,4}\right)&\mapsto&\left(\alpha_{1,1},\alpha_{1,4},\alpha_{4,1}\right),\\
      \left(\alpha_{0,3},\alpha_{3,3},\alpha_{3,0}\right)&\mapsto&\left(\alpha_{3,3},\alpha_{3,0},\alpha_{0,3}\right).
      \end{eqnarray*}
In other words, we have $\phi_{i}\operatorname{diag}(1,1,-1)\phi_{i}^{-1}$ with $i=2$ or $5$ inside $\Aut(\mathcal{C}_1)$ only if
\begin{eqnarray*}
  \alpha_{1,4}&=&\dfrac{2\left(29-54\lambda^6-54\mu^6\right)}{27\lambda\mu},\,\alpha_{0,3}=\dfrac{2\left(81\mu^6-27\lambda^6-26\right)}{27\lambda^3},\\
  \alpha_{3,3}&=&\dfrac{2\left(81\lambda ^6-27\mu^6-26\right)}{27\mu ^3},\,\alpha_{1,1}=\dfrac{2\left(27\lambda^6-54\mu ^6-52\right)}{27\lambda^4\mu},\\
\alpha_{4,1}&=&\dfrac{2\left(27\mu^6-54\lambda^6-52\right)}{27\lambda\mu ^4},\,\alpha_{3,0}=\dfrac{2\left(82-27\lambda^6-27\mu ^6\right)}{27\lambda^3\mu^3},\\
\alpha_{2,2}&=&\dfrac{9\lambda^6+9\mu^6+10}{3\lambda^2\mu^2}.
\end{eqnarray*}
Once more $\mathcal{C}_1$ reduces to $\mathcal{C}_{1,\lambda,\mu}$ described in Theorem \ref{thm3}, (1)-(iii).

Similarly, $\phi_3=\phi_1\,\circ\,[Y:Z:X]$ and $\phi_6=\phi_4\,\circ\,[Y:Z:X]$. So $\phi_{i}\operatorname{diag}(1,1,-1)\phi_{i}^{-1}$ with $i=3$ or $6$ is an automorphism for $\mathcal{C}_1$ only if
\begin{eqnarray*}
  \alpha_{1,1}&=&\dfrac{2\left(29-54\lambda^6-54\mu^6\right)}{27\lambda\mu},\,\alpha_{3,0}=\dfrac{2\left(81\mu^6-27\lambda^6-26\right)}{27\lambda^3},\\
  \alpha_{0,3}&=&\dfrac{2\left(81\lambda ^6-27\mu^6-26\right)}{27\mu ^3},\,\alpha_{4,1}=\dfrac{2\left(27\lambda^6-54\mu ^6-52\right)}{27\lambda^4\mu},\\
\alpha_{1,4}&=&\dfrac{2\left(27\mu^6-54\lambda^6-52\right)}{27\lambda\mu ^4},\,\alpha_{3,3}=\dfrac{2\left(82-27\lambda^6-27\mu ^6\right)}{27\lambda^3\mu^3},\\
\alpha_{2,2}&=&\dfrac{9\lambda^6+9\mu^6+10}{3\lambda^2\mu^2},
\end{eqnarray*}
where $\mathcal{C}_1$ becomes $K$-isomorphism to $C_{1,\lambda,\mu}$.

This shows Theorem \ref{thm3}, (1)-(iii).

\end{itemize}
  \item Second, suppose that $\psi^{-1}\Aut(\mathcal{C}_1)\psi=\varrho_2(\operatorname{A}_{4})$. Again, we can impose $\psi^{-1}\sigma\psi=[\zeta_6^{-1}Y:Z:X]$ or $[Z:\zeta_6X:Y]$, in particular, $\psi$ has the shape of $\psi_i$ below.
$$\psi_1:=
\left(
  \begin{array}{ccc}
    1 & \zeta_{18}^{-2} & \zeta_{18}^{-1} \\
    \lambda & \zeta_{18}^{-8}\lambda & \zeta_{18}^{5}\lambda \\
   \mu & \zeta_{18}^{4}\mu & \zeta_{18}^{-7}\mu \\
  \end{array}
\right),\,\psi_2:=\left(
  \begin{array}{ccc}
\mu & \zeta_{18}^{4}\mu & \zeta_{18}^{-7}\mu \\
    1 & \zeta_{18}^{-2} & \zeta_{18}^{-1}\\
   \lambda & \zeta_{18}^{-8}\lambda & \zeta_{18}^{5}\lambda \\
  \end{array}\right),\,
  \psi_3:=\left(
  \begin{array}{ccc}
   \lambda & \zeta_{18}^{-8}\lambda & \zeta_{18}^{5}\lambda \\
\mu & \zeta_{18}^{4}\mu & \zeta_{18}^{-7}\mu \\
   1 & \zeta_{18}^{-2} & \zeta_{18}^{-1}\\
  \end{array}
\right),$$
$$\psi_4:=
\left(
  \begin{array}{ccc}
    1 & \zeta_{18}^{2} & \zeta_{18} \\
    \lambda & \zeta_{18}^{-4}\lambda & \zeta_{18}^{7}\lambda \\
   \mu & \zeta_{18}^{8}\mu & \zeta_{18}^{-5}\mu \\
  \end{array}
\right),\,\psi_5:=\left(
  \begin{array}{ccc}
   \mu & \zeta_{18}^{8}\mu & \zeta_{18}^{-5}\mu \\
       1 & \zeta_{18}^{2} & \zeta_{18} \\
    \lambda & \zeta_{18}^{-4}\lambda & \zeta_{18}^{7}\lambda \\
  \end{array}\right),\,
  \psi_6:=\left(
  \begin{array}{ccc}
    \lambda & \zeta_{18}^{-4}\lambda & \zeta_{18}^{7}\lambda \\
\mu & \zeta_{18}^{8}\mu & \zeta_{18}^{-5}\mu \\
   1 & \zeta_{18}^{2} & \zeta_{18}\\
  \end{array}
\right),$$
for some $\lambda,\mu\in K^*$. However, it is straightforward to check that none of these transformation transforms $\mathcal{C}_1$ to $\mathcal{C}'$ whose core is $X^6+Y^6+Z^6$. Consequently, $\mathcal{C}_1$ is never a descendant of the Fermat curve $\mathcal{F}_6$ with $\Aut(\mathcal{C}_1)$ conjugate to $\varrho_2(\operatorname{A}_{4})$. 
\end{enumerate}
This proves Claim \ref{claim13,312v2}.
\end{proof}

It remains to prove Claim \ref{claim13,312v1} for $\mathcal{C}_2$ that is a descendant of the Fermat curve $\mathcal{F}_6$.
\begin{proof} (of Claim \ref{claim13,312v1})
- We easily discard the cases when $\Aut(\mathcal{C}_2)$ equals an $\operatorname{S}_3$ or $\Z/3\Z\rtimes\operatorname{S}_3$ inside $\Aut(\mathcal{F}_6)$ as none of the involutions $[X:sZ:s^{-1}Y],\,[sY:s^{-1}X:Z]$ and $[sZ:Y:s^{-1}X]$ preserves the core $X^5Y+Y^5Z+Z^5X$ of $\mathcal{C}_2$.

- On the other hand, if $\Aut(\mathcal{C}_2)$ equals $(\Z/3\Z)^2$ or $\operatorname{He}_3$, then the discussion we had to show Claim \ref{claim12,312v2} applies to conclude that $\mathcal{C}_2$ is $K$-isomorphic to
\begin{eqnarray*}
  \mathcal{C}':X^5Y&+&Y^5Z+XZ^5+\alpha_{4,0}\zeta_{21}^{4r}\left(X^4Z^2+X^2Y^4+Y^2Z^4\right)\\
  &+& \alpha_{3,2}\zeta_{21}^{-r}XYZ\left(X^2Y+XZ^2+Y^2Z\right)=0,
\end{eqnarray*}
where $\varrho_2((\Z/3\Z)^2)\subseteq\Aut(\mathcal{C}')$. Next, if $\Aut(\mathcal{C}')$ is $\operatorname{He}_3$, then there must be another automorphism $\sigma'\notin\varrho_2((\Z/3\Z)^2)$ of order $3$ that commutes with $\sigma$ such that $\sigma'\,[Y:Z:X]\,\sigma'^{-1}=[Y:Z:X]\,\sigma^{-1}$. Straightforward calculations show that $\sigma'=[s'Y:t'Z:X]$ or $[s'Z:t'X:Y]$ with $s't'=\zeta_3$ and $s'^2t'^{-1}=\zeta_3^{-1}$. So $\sigma'$ belongs to $\varrho_1((\Z/3\Z)^2)$ modulo $\langle[Y:Z:X]\rangle$.
Obviously, none of these transformations leaves invariant the core of $\mathcal{C}'$. Therefore, $\Aut(\mathcal{C}_2)$ is never conjugate to $\operatorname{He}_3$ inside $\mathcal{F}_6$.

- Thirdly, following the notations of Claim \ref{claim13,312v2}, a change of variables of the form $\phi=\phi_i$ for $i=1,2,...,6$ does not transform $\mathcal{C}_2$ to $\mathcal{C}'_2:X^6+Y^6+Z^6+$ lower order terms in $X,Y,Z$. Thus $\mathcal{C}_2$ is not a descendant of $\mathcal{F}_6$ such that $\phi^{-1}\Aut(\mathcal{C}_2)\phi=\varrho_1(\operatorname{A}_{4})$.
On the other hand, $\psi_{i}\operatorname{diag}(1,1,-1)\psi_{i}^{-1}\in\Aut(\mathcal{C}_2)$ with $i=1$ or $4$ only if
\begin{eqnarray*}
  \alpha_{2,4}&=&\dfrac{\lambda^5\mu+4\mu^5}{2\lambda^4},\,\alpha_{4,0}=\dfrac{\lambda+4\lambda^5\mu}{2\mu^2},\,\alpha_{0,2}=\dfrac{4\lambda+\mu^5}{2\lambda^2\mu^4}\\
  \alpha_{1,3}&=&\dfrac{2\left(2\lambda^5\mu+2\lambda+\mu^5\right)}{\lambda^3\mu^2},\,\alpha_{3,2}=\dfrac{2\lambda^5\mu+4\lambda+4\mu^5}{\lambda^2\mu},\,\alpha_{2,1}=\frac{2\left(2\lambda^5\mu+\lambda+2\mu^5\right)}{\lambda\mu^3}.
\end{eqnarray*}
The above restrictions are consequences of eliminating the coefficients of $X^6,\,Y^6,\,Z^6,\,$ $X^5Z,\,Y^4Z^2,\,$ $X^4Y^2,\,$ $X^4Z^2$ from the transformed equation $^{\psi_{i}\operatorname{diag}(1,1,-1)\psi_{i}^{-1}}\mathcal{C}_2=\mathcal{C}_2$. Moreover, $\mathcal{C}_2$ is $K$-isomorphic via $\psi_1$  (respectively $\psi_4$ followed by $Y\leftrightarrow Z$) to $\mathcal{C}_{2,\lambda,\mu}$ described in Theorem \ref{thm3}, (2)-(ii).
The rest is obvious by noticing that $\psi_2=\psi_1\,\circ\,[Z:X:Y],\,$ $\psi_5=\phi_4\,\circ\,[Z:X:Y],\,$ $\psi_3=\psi_1\,\circ\,[Y:Z:X]$ and $\psi_6=\psi_4\,\circ\,[Y:Z:X]$.

This proves Claim \ref{claim13,312v1}.
\end{proof}

\end{document}